\newtheorem{theorem}{Theorem}
\newtheorem{claim}{Claim}
\newcommand{\half}{\tfrac1{2}}
\newcommand{\E}{\mathbb{E}}
\newcommand{\tr}{*}
\newcommand{\rows}{m}
\newcommand{\cols}{n}
\newcommand{\bX}{{\boldsymbol{X}}}
\newcommand{\bU}{{\boldsymbol{U}}}
\newcommand{\bS}{{\boldsymbol{S}}}
\newcommand{\bV}{{\boldsymbol{V}}}
\newcommand{\bK}{{\boldsymbol{K}}}
\newcommand{\Id}{{\boldsymbol{I}}}
\newcommand{\bSigma}{{\boldsymbol{\Sigma}}}
\newcommand{\by}{{\boldsymbol{y}}}
\newcommand{\bz}{{\boldsymbol{z}}}
\newcommand{\be}{{\boldsymbol{e}}}
\newcommand{\bA}{{\boldsymbol{A}}}
\newcommand{\bb}{{\boldsymbol{b}}}
\newcommand{\bx}{{\boldsymbol{x}}}
\newcommand{\bbeta}{{\boldsymbol{\beta}}}
\newcommand{\balpha}{{\boldsymbol{\alpha}}}
\newcommand{\betaopt}{{\boldsymbol{\beta^{\circ}}}}
\newcommand\TS{\rule{0pt}{4ex}}       % Top strut
\newcommand\BS{\rule[-3ex]{0pt}{0pt}} % Bottom strut
\newcommand\ddfrac[2]{\frac{\displaystyle #1}{\displaystyle #2}}
\title{Rows vs. Columns: Randomized Kaczmarz or Gauss-Seidel for Ridge Regression} 
\author{Ahmed Hefny\footnote{Authors are listed in alphabetical order.} \\
Machine Learning Department \\
Carnegie Mellon University
\and Deanna Needell\footnotemark[1] \\
Department of Mathematical Sciences \\
Claremont McKenna College
\and Aaditya Ramdas\footnotemark[1] \\
Departments of EECS and Statistics \\
University of California, Berkeley
}  % If we don't like alphabetical I'm fine rearranging  
\date{\today}
\begin{document}
\maketitle

\begin{abstract}
The Kaczmarz and Gauss-Seidel methods aim to solve a $\rows \times \cols$ linear system $\bX\bbeta = \by$ by iteratively refining the solution estimate; the former uses random rows of $\bX$ 
{to update $\bbeta$ given the corresponding equations} and the latter uses random columns of $\bX$ {to update corresponding coordinates in $\bbeta$}.  
Recent work analyzed these algorithms in a parallel comparison for the overcomplete and undercomplete systems, showing convergence to the ordinary least squares (OLS) solution and the minimum Euclidean norm solution respectively. 
This paper considers the natural follow-up to the OLS problem --- ridge or Tikhonov regularized regression. By viewing them as variants of randomized coordinate descent, we present variants of the randomized Kaczmarz (RK) and randomized Gauss-Siedel (RGS) for solving this system and derive their convergence rates.  
We prove that a recent proposal, which can be interpreted as randomizing over \textit{both} rows and columns, is strictly suboptimal --- instead, one should always work with randomly selected \textit{columns} (RGS) when $\rows > \cols$ (\#rows $>$ \#cols) and with randomly selected \textit{rows} (RK) when $ \cols > \rows$ (\#cols $>$ \#rows).
\end{abstract}

\section{Introduction}

Solving systems of linear equations $\bX \bbeta = \by$, also sometimes called ordinary least squares (OLS) regression, dates back to the times of Gauss, who introduced what we now know as Gaussian elimination. A widely used iterative approach to solving linear systems is the conjugate gradient method. However, our focus is on variants of a recently popular class of randomized algorithms whose revival was sparked by \citet{SV09:Randomized-Kaczmarz}, who proved linear convergence  of the Randomized Kaczmarz (RK) algorithm that works on the \textit{rows} of $\bX$ (data points).  \citet{LL10:Randomized-Methods} afterwards proved linear convergence of Randomized Gauss-Seidel (RGS), which instead operates on the \textit{columns} of $\bX$ (features).  Recently, \citet{MaConvergence15} provided a side-by-side analysis of RK and RGS for linear systems in a variety of under- and over-constrained settings. Indeed, both RK and RGS can be viewed in two dual ways --- either as variants of stochastic gradient descent (SGD) for minimizing an appropriate objective function, or as variants of randomized coordinate descent (RCD) on an appropriate linear system --- and to avoid confusion, and aligning with recent literature, we refer to the row-based variant as RK and the column-based variant as RGS for the rest of this paper. The advantage of such approaches is that they do not need access to the entire system but rather only individual rows (or columns) at a time.  This makes them amenable in data streaming settings or when the system is so large-scale that it may not even load into memory.  This paper only analyzes variants of these types of iterative approaches, showing in what regimes one may be preferable to the other.

For statistical as well as computational reasons, one often prefers to solve what is called \textit{ridge} regression or \textit{Tikhonov-regularized} least squares regression. This corresponds to solving the convex optimization problem 
\[ 
\min_\bbeta \|\by - \bX \bbeta \|_2^2 + \lambda \|\bbeta\|^2
\]
 for a given parameter $\lambda$ (which we assume is fixed and known for this paper) and
%We hence consider solving the resulting system of linear equations,% \ah{I prefer (also known as ridge regression)},
 a (real or complex) $\rows\times \cols$ matrix $\bX$. We can reduce the above problem to solving a linear system, in \textit{two} different (dual) ways.
% \begin{equation}\label{eq:syseq}
% (\bX^\tr \bX + \lambda \Id_\cols) \bbeta = \bX^\tr \by.
% \end{equation}
 %However, there is actually \textit{another} dual linear system, 
 Our analysis will show that the performance of the two associated algorithms is very different when $\rows < \cols$ and when $\rows > \cols$.  

\paragraph{Contribution}

There exist a large number of algorithms, iterative and not, randomized and not, for this problem. In this work, we will only be concerned with the aforementioned subclass of randomized algorithms, RK and RGS.  
%We contrast this with a previous approach \cite{ivanov2013kaczmarz}, showing both analytically and empirically the drawbacks of the prior method.  %We can summarize our findings as follows:
Our main contribution is to analyze 
%an update rule, given in \eqref{eq:kacz1}-\eqref{eq:kacz3} of Section \ref{sec:ours} below, designed for the ridge regression problem.  Using our method, we prove convergence guarantees that improve upon those for classical RK and RGS approaches.  We analyze 
the convergence rates of variants of RK and RGS for ridge regression, showing linear convergence in expectation, but the emphasis will be on the convergence rate parameters (e.g. condition number) that come into play for our algorithms when $m>n$ and $m<n$.   We show that when $m>n$, one should randomize over columns (RGS), and when $m<n$, one should randomized over rows (RK).  Lastly, we show that randomizing over both rows and columns (effectively proposed in prior work as the \textit{augmented projection method} by \citet{ivanov2013kaczmarz}) leads to wasted computation and is always suboptimal.  

 %Our contribution thus extends the unifying framework of these iterative approaches, as done by \citet{MaConvergence15} for OLS, to the setting of ridge regression, while also providing methods with improved performance.

\paragraph{Paper Outline}

In Section~\ref{sec:algs}, we introduce the two most relevant (for our paper) algorithms for linear systems, Randomized Kaczmarz (RK) and Randomized Gauss-Seidel (RGS). In Section~\ref{sec:ours} we describe our proposed variants of these algorithms and provide a simple side-by-side analysis in various settings.
In Section~\ref{sec:variants}, we briefly describe what happens when RK or RGS is naively applied to the ridge regression problem.  We also describe a recent proposal to tackle this issue, which can  be interpreted as a combination of an RK-like and RGS-like updates, and discuss its drawbacks compared to our solution.  We conclude in Section~\ref{sec:exp} with detailed experiments that validate the derived theory.

\section{Randomized Algorithms for OLS}\label{sec:algs}

%In this section, we will summarize the RK and RGS algorithms mentioned in the introduction. We will describe their iterative update rules and mention their convergence guarantees, leaving the details of convergence to the next section.  

We begin by briefly describing the randomized Kaczmarz and Gauss-Siedel methods, which serve as the foundation to our approach for ridge regression.

\paragraph{Notation} Throughout the paper we will consider an $m\times n$ (real or complex) matrix $\bX$ and write $\bX^i$ to represent the $i$th row of $\bX$ (or $i$th entry of a vector) and $\bX_{(j)}$ to denote the $j$th column.  We will write solution estimations $\bbeta$ as column vectors.  We write vectors and matrices in boldface, and constants in standard font.  The singular values of a matrix $\bX$ are written as $\sigma(\bX)$ or just $\sigma$, with subscripts $\min$, $\max$ or integer values corresponding to the smallest, largest, and numerically ordered (increasing) values.  We denote the identity matrix by $\Id$, with a subscript denoting the dimension when needed.  Unless otherwise specified, the norm $\|\cdot\|$ denotes the standard Euclidean norm (or spectral norm for matrices).  We use the norm notation $\|\bz\|_{\bA^*\bA}^2$ to mean $\langle\bz, \bA^*\bA\bz\rangle = \|\bA\bz\|^2$.  
We will use the superscript $\circ$ to denote the optimal value of a vector.
 %\ah{the Euclidean norm for vectors and spectral norm for matrices}.

\subsection{Randomized Kaczmarz (RK) for $\bX \bbeta = \by$}

The Kaczmarz method \cite{Kac37:Angenaeherte-Aufloesung} is also known in the tomography setting as the \textit{Algebraic Reconstruction Technique} (ART) \cite{GBH70:Algebraic-Reconstruction,Nat01:Mathematics-Computerized,Byr08:Applied-Iterative,herman2009fundamentals}.  In its original form, each iteration consists of projecting the current estimate onto the solution space given by a single row, in a cyclic fashion.  It has long been observed that selecting the rows $i$ in a random fashion improves the algorithm's performance, reducing the possibility of slow convergence due to adversarial or unfortunate row orderings \cite{HS78:Angles-Null,HM93:Algebraic-Reconstruction}.  

Recently, \citet{SV09:Randomized-Kaczmarz} showed that the RK method converges linearly to the solution $\betaopt$ of $\bX\bbeta = \by$ in expectation, with a rate that depends on natural geometric properties of the system, improving upon previous convergence analyses (e.g. \cite{XZ02:Method-Alternating}).  In particular, they propose the variant of the Kaczmarz update with the following selection strategy: 

\begin{equation}\label{eq:rk}
\bbeta_{t+1} := \bbeta_t + \frac{(y^i - \bX^{i}\bbeta_t)}{\|\bX^i\|^2_2} (\bX^i)^*, \quad\text{where}\quad \Pr(\text{row} = i) = \frac{\|\bX^i\|^2_2}{\|\bX\|_F^2},
\end{equation}
where the first estimation $\bbeta_0$ is chosen arbitrarily and $\|\bX\|_F$ denotes the Frobenius norm of $\bX$.
\citet{SV09:Randomized-Kaczmarz} then prove that the iterates $\bbeta_t$ of this method satisfy the following,
\begin{equation}\label{RVbound}
\mathbb{E}\|\bbeta_t - \betaopt\|_2^2 \leq \left(1 - \frac{\sigma^2_{\min}(\bX)}{\|\bX\|_F^2}\right)^t \|\bbeta_0 - \betaopt\|_2^2,
\end{equation}
where we refer to the quantity $\frac{\sigma^2_{\min}(\bX)}{\|\bX\|_F^2}$ as the scaled condition number of $\bX$.
This result was extended to the inconsistent case \cite{Nee10:Randomized-Kaczmarz}, derived via a probabilistic almost-sure convergence perspective \cite{CP12:Almost-Sure-Convergence}, accelerated in multiple ways \cite{Elf80:Block-Iterative-Methods,EN11:Acceleration-Randomized,popa2012kaczmarz,NW12:Two-Subspace-Projection,needell2013paved}, and generalized to other settings \cite{LL10:Randomized-Methods,richtarik2012iteration,NSWjournal}. 
 %We will formally discuss the convergence properties of this algorithm in future sections.

%We describe here the randomized variant of the Kaczmarz method put forth by \citet{SV09:Randomized-Kaczmarz}.
%Taking  $\bX, \by$ as input and starting from an arbitrary initial estimate for $\bbeta$ (for example $\bbeta_0 = \bf{0}$), RK repeats the following in each iteration.  First, a random row $i \in \{1 ,..., m\}$ is selected with probability proportional to its Euclidean norm, i.e.
%$$
%,
%$$
%
%Then, project the current iterate onto that row, i.e.
%
%where here and throughout $\bX^*$ denotes the (conjugate) transpose of $\bX$.
%
%Intuitively, this update can be seen as greedily satisfying the $i$th equation in the linear system.  Indeed, it is easy to see that after the update,
%\begin{equation}
%\bX^{i}\bbeta_{t+1} = y^i.
%\end{equation}
%
%Defining
%$$
%L(\bbeta) = \half \|\by-\bX\bbeta\|^2 = \half \sum_{i=1}^m (y^i - \bX^{i} \bbeta)^2,
%$$
 %we can alternatively interpret this update as stochastic gradient descent (choosing a random data-point on which to update), where the stepsize is the inverse Lipschitz constant of the stochastic gradient  
%$$
%\nabla^2 \half (y^i - \bX^{i} \bbeta)^2 = \|\bX^i\|^2_2.
%$$
%%Alternatively, since $[\nabla L(\beta)]_r = -[X^T(y-X\beta)]_r = -(y^r-X^{rT}\beta)X^r$ it can be seen as taking a gradient descent step, where the stepsize is the inverse Lipschitz constant of the gradient along the $r$-th coordinate, since $[\nabla^2 L(\beta)]_{r,r} = (X^TX)_{r,r} = \|X_r\|^2$.

\subsection{Randomized Gauss-Seidel (RGS) for $\bX \bbeta = \by$}

The Randomized Gauss-Seidel (RGS) method \footnote{Sometimes this method is referred to in the literature as Randomized Coordinate Descent (RCD).
However, we establish in 
Section \ref{sec:RK_RGS_RCD} that both Randomized Kaczmarz and Randomized Gauss-Seidel can be viewed as 
randomized coordinate descent methods on different variables.}
selects columns rather than rows in each iteration.  For a selected coordinate $j$, RGS attempts to minimize the objective function $L(\bbeta) = \half \|\by-\bX\bbeta\|^2_2$ with respect to coordinate $j$ in that iteration.  It can thus be similarly defined by the following update rule
%\ah{add the probability of selecting columns similar to (2)}
:  %repeats the following in each iteration. First, a random column $j \in \{1,..., n\}$ is selected.
%We then minimize the objective $L(\bbeta) = \half \|\by-\bX\bbeta\|^2_2$ with respect to this coordinate to get
\begin{equation}\label{eq:rgs}
\bbeta_{t+1} := \bbeta_t + \frac{\bX_{(j)}^*(\by-\bX\bbeta_t)}{\|\bX_{(j)}\|^2_2} \be_{(j)},\quad\text{where}\quad \Pr(\text{col} = j) = \frac{\|\bX_{(j)}\|^2_2}{\|\bX\|_F^2},
\end{equation}
where $\be_{(j)}$ is the $j$th coordinate basis column vector (all zeros with a $1$ in the $j$th position). %It can be seen as greedily minimizing the objective with respect to the $j$th coordinate. Indeed, letting $\bX_{(-j)},\bbeta^{-j}$ represent $\bX$ without its $j$th column and $\bbeta$ without its $j$th coordinate,
%\begin{equation}
%\frac{\partial L}{\partial \bbeta^j} = -\bX_{(j)}^*(\by-\bX\bbeta) = -\bX_{(j)}^*(\by-\bX_{(-j)}\bbeta^{-j} - \bX_{(j)}\beta^j).
%\end{equation}
%Setting this equal to zero for the coordinate-wise minimization, we get the aforementioned update \eqref{eq:rgs} for $\bbeta^j$. Alternatively, since $[\nabla L(\bbeta)]^j = -\bX_{(j)}^*(\by-\bX\bbeta)$, the above update can intuitively be seen as a univariate descent step where the stepsize is the inverse Lipschitz constant of the gradient along the $j$th coordinate, since the $(j,j)$ entry of the Hessian is 
%$$
%[\nabla^2 L(\bbeta)]_{j,j} = (\bX^*\bX)_{j,j} = \|\bX_{(j)}\|^2_2.
%$$
  %(scalar) $\beta_c$
%\begin{equation}
%\beta^+_c = \frac{X_c^T(y - X_{-c}\beta_{-c})}{\|X_c\|^2} = \beta_c + \frac{X_c^T(y-X\beta)}{\|X_c\|^2}
%\end{equation}
\citet{LL10:Randomized-Methods} showed that the residuals of RGS converge again at a linear rate, %has an expected linear convergence rate.  We will detail the convergence properties of this algorithm and the others in the next section.
\begin{equation}\label{LLbound}
\mathbb{E}\|\bX\bbeta_t - \bX\betaopt\|_2^2 \leq \left(1 - \frac{\sigma^2_{\min}(\bX)}{\|\bX\|_F^2}\right)^t \|\bX\bbeta_0 - \bX\betaopt\|_2^2.
\end{equation}

Of course when $\rows > \cols$ and the system is full-rank, this convergence also implies convergence of the iterates $\bbeta_t$ to the solution $\betaopt$.  Connections between the analysis and performance of RK and RGS were recently studied in \cite{MaConvergence15}, which also analyzed extended variants to the Kacmarz \cite{ZF12:Randomized-Extended} and Gauss-Siedel method \cite{MaConvergence15} which always converge to the least-squares solution in both the underdetermined and overdetermined cases.
Analyses of RGS usually applies more generally than our OLS problem, see e.g. \citet{nesterov2012efficiency} or \citet{richtarik2012iteration} for further details. %Also, see \cite{MaConvergence15} for a unified viewpoint and analysis of RK and RGS. \ah{This last sentence looks redundant.}

\subsection{RK and RGS as variants of randomized coordinate descent}
\label{sec:RK_RGS_RCD}

Both RK and RGS can be viewed in the following fashion: suppose we have a positive definite matrix $\bA$, and we want to solve $\bA\bx=\bb$.  
Casting the solution to the linear system as the solution to $\min_{\bx} \half \bx^\tr\bA\bx - \bb^\tr\bx$, one can derive the coordinate-descent update 
\[
\bx_{t+1} = \bx_t + \frac{b_i-\bA^i\bx_t}{A_{ii}} \be_{(i)},
\]
where $b_i-\bA^i \bx_t$ is basically the $i$-th coordinate of the gradient, and $A_{ii}$ is the Lipschitz constant of the $i$-th coordinate of the gradient (see related works e.g. \citet{LL10:Randomized-Methods}, \citet{Nesterov12}, \citet{RicTak12}, \citet{SidLee13}). In this light, the original RK update in Equation \eqref{eq:rk} can be seen as the randomized coordinate descent rule for the positive semidefinite system $\bX\bX^\tr \balpha = \by$ (using the standard primal-dual mapping $\bbeta = \bX^\tr \balpha$) and treating $\bA = \bX\bX^\tr$ and $\bb=\by$. Similarly, the RGS update in \eqref{eq:rgs} can be seen as the randomized coordinate descent rule for  the positive semidefinite system $\bX^\tr \bX \bbeta = \bX^\tr \by$ and treating $\bA = \bX^\tr \bX$ and $\bb=\bX^\tr \by$. 
\footnote{It is worth noting that both methods also have an interpretation as variants of the stochastic gradient method (for minimizing a different objective function from above, namely $\min_\bbeta \|\by - \bX \bbeta\|_2^2$), but this connection will not be further pursued here.}
% \dn{It is of course thus not surprising, as pointed out in recent works \cite{gower2015randomized,MaConvergence15,leventhal2010randomized}, that RK is inferior since coordinate descent methods are designed for positive definite systems.}

\section{Variants of RK and RGS for Ridge Regression}\label{sec:ours}

Utilizing the connection to coordinate descent, one can derive two algorithms for ridge regression, depending on how we formulate the linear system that solves the underlying optimization problem. In the first formulation, we let $\betaopt$ be the solution of the system
\begin{equation}\label{eq:syseq}
(\bX^\tr \bX + \lambda \Id_\cols)\beta = \bX^\tr \by,
\end{equation}
and we attempt to solve for $\betaopt$ iteratively by updating an initial guess $\bbeta_0$ using columns of $\bX$.
In the second, we note the identity $\betaopt = \bX^\tr \balpha^\circ$, where $\balpha^\circ$ is the optimal solution of the system
\begin{equation}\label{eq:syseq2}
 (\bX \bX^\tr + \lambda \Id_\rows)\balpha = \by,
\end{equation}
and we attempt to solve for $\balpha^\circ$ iteratively by updating an initial guess $\balpha_0$ using rows of $\bX$.
%\dn{
The formulations \eqref{eq:syseq} and \eqref{eq:syseq2} can be viewed as primal and dual variants, respectively, of the ridge regression problem, and our analysis is related to the analysis in \cite{csiba2016coordinate} (independent work around the same time as this current work).
%}

It is a short exercise to verify that the optimal solutions of these two seemingly different formulations are actually identical (in the machine learning literature, the latter is simply an instance of kernel ridge regression). The second method's randomized coordinate descent updates are:
%By viewing it as the solution to
%\[
%\min_\balpha \|y - \bX \bX^\tr \balpha\|_2^2 + \lambda \balpha^\tr \bX \bX^\tr \balpha
%\]
\begin{eqnarray}
\delta^i_t &=& \frac{y^i - \bbeta_t^\tr(\bX^{i})^* - \lambda \alpha^i_t}{\|\bX^i\|^2 + \lambda}, \label{eq:kacz1}\\
\alpha^i_{t+1} &=& \alpha^i_t + \delta^i_t,  \label{eq:kacz2}
\end{eqnarray}
where the $i$th row is selected with probability proportional to $\|\bX^i\|^2 + \lambda$. We may keep track of $\bbeta$ as $\balpha$ changes using the update $\bbeta_{t+1} = \bbeta_t + \delta^i_t (\bX^{i})^\tr$.
%If all rows are normalized, this is a uniform distribution. 
%Since, it is more typical to normalize the columns in statistics, and hence one pass over the data must be made to calculate row norms (see e.g. \cite{NSWjournal} for other alternatives in the general setting). 
Denoting $\bK := \bX \bX^\tr$ as the Gram matrix of inner products between rows of $\bX$, and $r^i_t := y_i - \sum_j K_{ij} \alpha_t^j$ as the $i$th residual at step $t$, the above update for $\balpha$ can be rewritten as
%\[
%f_{t+1} = f_t + \frac{y_i - f_t(x_i)}{\|X^i\|^2 + \lambda}\phi_{x_i}
%\]
%where one would actually perform the following update, by representing $f_t = \sum_j \alpha_t^t \phi_{x_j}$,
\begin{eqnarray}\label{eq:kerkacz1}
\alpha^i_{t+1} &=& \frac{K_{ii}}{K_{ii}+\lambda}\alpha^i_t + \frac{y_i - \sum_j K_{ij} \alpha_t^j}{K_{ii} + \lambda}\\
&=& S_{\frac{\lambda}{K_{ii}}} \left( \alpha_t^i + \frac{r^i_t}{K_{ii}} \right)\label{eq:kerkacz3}
\end{eqnarray}
where row $i$ is picked with probability proportional to $K_{ii}+\lambda$  and $S_a(z) := \frac{z}{1+a}$.
% This update rule exhibits linear convergence for $\min_\bbeta \|\by - \bX\bbeta\|^2 + \lambda \|\bbeta\|^2$.

In contrast, we write below the randomized coordinate descent updates for the first linear system. Analogously calling ${\bf r}_t := \by - \bX \bbeta_t$ as the residual vector, we  have
\begin{eqnarray}
\beta^j_{t+1} &=&\beta^j_t + \frac{\bX_{(j)}^\tr \by - \bX_{(j)}^\tr \bX \bbeta_t -\lambda \beta^j_t}{\|\bX_{(j)}\|^2 + \lambda}\label{eq:RGSridge1}\\
%&=& \frac{\|\bX_{(j)}\|^2}{\|\bX_{(j)}\|^2 + \lambda}\beta^j_t + \frac{\bX_{(j)}^\tr {\bf r}_t}{\|\bX_{(j)}\|^2 + \lambda}\\ 
&=& S_{\frac{\lambda}{\|\bX_{(j)}\|^2}} \left(\beta^j_t + \frac{\bX_{(j)}^\tr {\bf r}_t}{\|\bX_{(j)}\|^2} \right). \label{eq:RGSridge3} 
\end{eqnarray}
Next, we analyze the difference in these approaches, equation \eqref{eq:kerkacz3} being called the RK update (working on rows) and equation \eqref{eq:RGSridge3} being called the RGS update (working on columns).

%\textbf{Proof of convergence for RGS} Mimics the proof of RGS from the REGS paper:
%\begin{enumerate}
%\item $\|\beta^+ - \beta^*\|_{X^TX + \lambda I}^2 = \|\beta - \beta^*\|_{X^TX + \lambda I}^2 - \|\beta^+ - \beta\|_{X^T X + \lambda I}^2$
%\item $\mathbb{E} [ \| \beta^+ - \beta \|_{X^T X + \lambda I}^2 \vert \beta  ]  =  \|\beta - \beta^*\|_{X^TX + \lambda I}^2 - \sum_i \frac{\|X_i\|^2 + \lambda }{\|X\|_F^2 + p \lambda} \frac{( X_i^T ( y - X\beta) - \lambda \beta_i)^2}{\|X_i\|^2 + \lambda} $
%\end{enumerate}
%The REGS paper proof has the same things with $\lambda=0$. The first equation can be verified by substitution. Substituting for $\beta^+ - \beta$ from Eq.\eqref{eq:RGSridge1}, and taking expectations gives the second.
%
%The second term just simplifies to 
%$$- \frac{\| X^T y - (X^T X + \lambda I)\beta\|^2}{\|X\|_F^2 + p\lambda}
 %= - \frac{\| (X^T X + \lambda I)\beta^* - (X^T X + \lambda I)\beta\|^2}{\|X\|_F^2 + p\lambda} \leq - \frac{\sigma_{\min}(X^TX + \lambda I)\| \beta^* - \beta\|_{X^T X + \lambda I}^2}{Tr(X^TX + \lambda I)} 
%$$

\subsection{Computation and Convergence}
The algorithms presented in this paper are of computational interest because they completely avoid inverting, storing or even forming  $\bX\bX^\tr$ and $\bX^\tr\bX$.  
The RGS updates take $O(\rows)$ time, since each column (feature) is of size $\rows$. In contrast, the RK updates take $O(\cols)$ time since that is the length of a row (data point). 
While the RK and RGS algorithms are similar and related, one should not be tempted into thinking their convergence rates are the same. Indeed, using a similar style proof as presented in \cite{MaConvergence15}, one can analyze the convergence rates in parallel as follows. Let us denote 
\[
\bSigma' := \bX^\tr \bX + \lambda \Id_\cols \in \mathbb{R}^{n \times n} 
\quad\text{and}\quad \bK' := \bX \bX^\tr + \lambda \Id_m \in \mathbb{R}^{m \times m}
\] 
for brevity, and let $\sigma_1,\sigma_2,...$ be the singular values of $\bX$ in increasing order.  Observe that
$$
\sigma_{\min}(\Sigma') = 
\begin{cases}  \sigma_1^2 + \lambda ~\mbox{~if $\rows>\cols$} \\
\lambda ~\mbox{~if $\cols>\rows$}
\end{cases}
\quad\text{and}\quad
\sigma_{\min}(K') = 
\begin{cases}  \lambda ~\mbox{~if $\rows>\cols$} \\
\sigma_1^2 + \lambda ~\mbox{~if $\cols>\rows$}.
\end{cases}
$$
Then, denoting $\betaopt$ and $\balpha^\circ$ as the solutions to the two ridge regression formulations, and $\bbeta_0$ and $\balpha_0$ as the initializations of the two algorithms, we can prove the following result. 
\begin{theorem}
The rate of convergence for RK for ridge regression is :
\begin{eqnarray}
\E\|\balpha_t - \balpha^\circ\|_{\bK+\lambda \Id_\cols}^2 &\leq& %\left( 1 - \frac{\sigma_{\min}(K + \lambda I_n)}{Tr(K+\lambda I_n)}\right)^{t} \|\alpha_0 - \alpha^*\|_{K+\lambda I_n}^2\\
%&=& 
\begin{cases} \left( 1 - \ddfrac{\lambda}{\sum_i \sigma_i^2 + \rows\lambda}\right)^{t} \|\balpha_0 - \balpha^\circ\|_{\bK+\lambda \Id_\rows}^2 ~\mbox{~if $\rows>\cols$ }\\
\left( 1 - \ddfrac{\sigma_1^2 + \lambda}{\sum_i \sigma_i^2 + \rows\lambda}\right)^{t} \|\balpha_0 - \balpha^\circ\|_{\bK+\lambda \Id_\rows}^2 ~\mbox{~if $\cols>\rows$} .
\end{cases}
\end{eqnarray}
The rate of convergence for RGS for ridge regression is :
\begin{eqnarray}
\E\|\bbeta_t - \betaopt\|_{\bX^\tr\bX +\lambda \Id_\cols}^2 &\leq& %\left( 1 - \frac{\sigma_{\min}(\Sigma + \lambda I_p)}{Tr(\Sigma + \lambda I_p)}\right)^{t} \|\beta_0 - \beta^*\|_{\Sigma+\lambda I_p}^2 \\
%&=& 
\begin{cases} \left( 1 - \ddfrac{\sigma_1^2 + \lambda}{\sum_i \sigma_i^2 + \cols\lambda}\right)^{t} \|\bbeta_0 - \betaopt\|_{\bX^\tr \bX+\lambda \Id_\cols}^2 ~\mbox{~if $\rows>\cols$} \\
\left( 1 - \ddfrac{ \lambda}{\sum_i \sigma_i^2 + \cols\lambda}\right)^{t} \|\bbeta_0 - \betaopt\|_{\bX^\tr\bX+\lambda \Id_\cols}^2 ~\mbox{~if $\cols>\rows$}.
\end{cases}
\end{eqnarray}
\label{thm:rates}
\end{theorem}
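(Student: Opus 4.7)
The plan is to invoke the reduction established in Section~\ref{sec:RK_RGS_RCD}: the RK iteration \eqref{eq:kacz1}--\eqref{eq:kacz2} is exactly randomized coordinate descent (RCD) on the strictly positive definite system $\bK' \balpha = \by$, and the RGS iteration \eqref{eq:RGSridge1} is exactly RCD on the strictly positive definite system $\bSigma' \bbeta = \bX^\tr \by$. The diagonals satisfy $(\bK')_{ii} = \|\bX^i\|^2 + \lambda$ and $(\bSigma')_{jj} = \|\bX_{(j)}\|^2 + \lambda$, so the sampling rules stated alongside the updates coincide with the ``probability proportional to the diagonal'' rule required by the standard RCD analysis.

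With the reduction in hand, I would prove a single generic convergence lemma: for RCD on any positive definite system $\bA \bx = \bb$ with coordinate $i$ drawn with probability $A_{ii}/\mathrm{tr}(\bA)$,
\[
\E \|\bx_t - \bx^\circ\|_{\bA}^2 \;\leq\; \left(1 - \frac{\sigma_{\min}(\bA)}{\mathrm{tr}(\bA)}\right)^{t} \|\bx_0 - \bx^\circ\|_{\bA}^2.
\]
The argument has three short steps. First, exact one-dimensional minimization in coordinate $i$ of $F(\bx) = \half \bx^\tr \bA \bx - \bb^\tr \bx$ decreases $F$ by $(b_i - \bA^i \bx_t)^2/(2 A_{ii})$. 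Second, averaging over $i$ with weights $A_{ii}/\mathrm{tr}(\bA)$ telescopes this into $\|\bA \bx_t - \bb\|^2/(2\,\mathrm{tr}(\bA))$. Third, writing $\bA\bx_t - \bb = \bA(\bx_t - \bx^\circ)$ and using $\|\bA \bz\|^2 \geq \sigma_{\min}(\bA) \|\bz\|_{\bA}^2$ converts the per-step decrease into a geometric contraction of $\half\|\bx_t - \bx^\circ\|_{\bA}^2$ with factor $1 - \sigma_{\min}(\bA)/\mathrm{tr}(\bA)$; iterating with the tower property produces the displayed bound.

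The rest is bookkeeping. For $\bA = \bK'$, use $\mathrm{tr}(\bK') = \mathrm{tr}(\bX\bX^\tr) + \rows\lambda = \sum_i \sigma_i^2 + \rows\lambda$, and observe that $\sigma_{\min}(\bK') = \sigma_{\min}(\bX\bX^\tr) + \lambda$ equals $\lambda$ when $\rows > \cols$ (since $\bX\bX^\tr$ is then rank-deficient with a zero eigenvalue) and equals $\sigma_1^2 + \lambda$ when $\cols > \rows$ (since $\bX\bX^\tr$ is full rank with smallest eigenvalue $\sigma_1^2$, recalling the paper's convention that singular values are indexed in increasing order). Symmetrically, $\mathrm{tr}(\bSigma') = \sum_i \sigma_i^2 + \cols\lambda$, and $\sigma_{\min}(\bSigma')$ is $\sigma_1^2 + \lambda$ or $\lambda$ according as $\rows > \cols$ or $\cols > \rows$. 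Substituting these four pairs into the generic RCD bound yields exactly the four cases in the theorem.

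I do not anticipate a real obstacle. Because $\bK'$ and $\bSigma'$ are strictly positive definite ($\sigma_{\min} \geq \lambda > 0$), the spectral inequality in the third step of the generic lemma is genuine and avoids any pseudoinverse or image-subspace gymnastics that would be needed at $\lambda = 0$. The only subtle point is cosmetic: the LHS subscript $\Id_\cols$ in the RK bound should read $\Id_\rows$ so that the weighting matrix $\bK + \lambda \Id_\rows = \bK'$ has compatible dimensions, and the RCD reduction makes the intended norm $\|\cdot\|_{\bK'}^2$ unambiguous.
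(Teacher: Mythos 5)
Your proposal is correct and follows essentially the same route as the paper's proof: the paper likewise treats both updates as exact randomized coordinate descent on the positive definite systems $\bK'\balpha=\by$ and $\bSigma'\bbeta=\bX^\tr\by$, computes the expected one-step decrease as $\|\bA\bx_t-\bb\|^2/\mathrm{tr}(\bA)$ (via a Pythagoras identity that is equivalent to your function-value argument, since $F(\bx)-F(\bx^\circ)=\tfrac{1}{2}\|\bx-\bx^\circ\|_{\bA}^2$), and then applies $\|\bA\bz\|^2\ge\sigma_{\min}(\bA)\|\bz\|_{\bA}^2$ before substituting the trace and smallest-eigenvalue values exactly as in your bookkeeping step. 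Packaging the argument as one generic RCD lemma applied twice, and noting that the left-hand subscript $\Id_\cols$ in the RK bound should read $\Id_\rows$, are only cosmetic differences from the paper's side-by-side derivation.
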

Before we present a proof, we may immediately note that  RGS  is preferable in the overdetermined case while RK is preferable in the underdetermined case.  
Hence, our  proposal for solving such systems as as follows: 
\begin{quote}
When $m>n$, always use RGS, and when $m<n$, always use RK. 
\end{quote}

\begin{proof}
We summarize the proof in the table below, which allows for a side-by-side comparison of how the error reduces by a constant factor after one step of the algorithm. Let $\E_t$ represent the expectation with respect to the random choice (of row or column) in iteration $t$, conditioning on all the previous iterations.

\begin{table}[h!]
\begin{tabular}{|l|l|} 
\hline
\TS \BS
 RK: $\E_t \|\balpha_{t+1} - \balpha^\circ\|_{\bK'}^2$ 
 & 
 RGS: $\E_t \|\bbeta_{t+1} - \betaopt\|_{\bSigma'}^2$
\\
\hline
\TS \BS
$\stackrel{(i)}{=} \E_t \left(\|\balpha_t - \balpha^\circ\|_{\bK'}^2 - \|\balpha_{t+1} - \balpha_t\|_{\bK'}^2\right)$
 & 
 $\stackrel{(i')}{=}\E_t \left(\|\bbeta_t - \betaopt\|_{\bSigma'}^2 - \|\bbeta_{t+1} - \bbeta_t\|_{\bSigma'}^2\right)$
 \\
\hline
\TS \BS
$\stackrel{(ii)}{=}\|\balpha_t - \balpha^\circ\|_{\bK'}^2 - \sum_i \frac{\bK_{ii} + \lambda }{\|\bX\|_F^2 + \rows \lambda} \frac{(y_i - \sum_{j}\bK_{ij}\alpha^j_t - \lambda\alpha^i_t)^2}{\bK_{ii} + \lambda}$
& 
$\stackrel{(ii')}{=}\|\bbeta_t - \betaopt\|_{\bSigma'}^2 - \sum_j \frac{\|\bX_{(j)}\|^2 + \lambda }{\|\bX\|_F^2 + \cols \lambda} \frac{( \bX_{(j)}^\tr ( \by - \bX\bbeta_t) - \lambda \beta_t^j)^2}{\|\bX_{(j)}\|^2 + \lambda} $
\\
\hline
\TS \BS
$= \|\balpha_t - \balpha^\circ\|_{\bK'}^2 - \frac{\| (\by - \bK' \balpha_t)\|^2}{\|\bX\|_F^2 + \rows\lambda}$
& 
$= \|\bbeta_t - \betaopt\|_{\bSigma'}^2 - \frac{\| \bX^\tr \by - \bSigma'\bbeta_t\|^2}{\|\bX\|_F^2 + \cols\lambda}$
\\
\hline
\TS \BS
$= \|\balpha_t - \balpha^\circ\|_{\bK'}^2 - \frac{\| \bK'(\balpha^\circ - \balpha_t)\|^2}{\|\bX\|_F^2 + \rows\lambda}$
& 
$= \|\bbeta_t - \betaopt\|_{\bSigma'}^2 - \frac{\| \bSigma'(\betaopt - \bbeta_t)\|^2}{\|\bX\|_F^2 + \cols\lambda}$
\\
\hline
\TS \BS
$\leq \|\balpha_t - \balpha^\circ\|_{\bK'}^2 -\frac{\sigma_{\min}(\bK')\| \balpha^\circ - \balpha_t\|_{\bK'}^2}{Tr(\bK')}$
& 
$\leq \|\bbeta_t - \betaopt\|_{\bSigma'}^2 -\frac{\sigma_{\min}(\bSigma')\| \betaopt - \bbeta_t\|_{\bSigma'}^2}{Tr(\bSigma')}$
\\
\hline
\TS \BS
$=\begin{cases} \left( 1 - \frac{\lambda}{\sum_i \sigma_i^2 + \rows\lambda}\right) \|\balpha_t - \balpha^\circ\|_{\bK'}^2 ~\mbox{~if $\rows>\cols$ }\\
\left( 1 - \frac{\sigma_1^2 + \lambda}{\sum_i \sigma_i^2 + \rows\lambda}\right) \|\balpha_t - \balpha^\circ\|_{\bK'}^2 ~\mbox{~if $\cols>\rows$} 
\end{cases}$
& 
$=\begin{cases} \left( 1 - \frac{\sigma_1^2 + \lambda}{\sum_i \sigma_i^2 + \cols\lambda}\right) \|\bbeta_t - \betaopt\|_{\bSigma'}^2 ~\mbox{~if $\rows>\cols$} \\
\left( 1 - \frac{ \lambda}{\sum_i \sigma_i^2 + \cols\lambda}\right) \|\bbeta_t - \betaopt\|_{\bSigma'}^2 ~\mbox{~if $\cols>\rows$}
\end{cases}$
\\
\hline
\end{tabular}
\end{table}
Applying these bounds recursively, we obtain the theorem. We must only justify the equalities $(i),(i')$ and $(ii),(ii')$ at the start of the above proof. We derive these here for $\betaopt$ and it holds with a similar derivation for $\balpha^\circ$. For succinctness, denote $\bbeta_t$ as simply $\bbeta$ and $\bbeta_{t+1}$ as $\bbeta^+$.

$(i),(i')$ hold simply because of the optimality conditions for $\balpha^\circ$ and $\betaopt$.   Note that $(i')$ can be interpreted as an instance of Pythagoras' theorem, and to verify its truth we must prove the orthogonality of $\bbeta^+ - \betaopt$ and $\bbeta^+ - \bbeta$ under the inner-product induced by $\bSigma'$. In other words, we need to verify that
\[
 (\bbeta^+ - \bbeta)^\tr (\bX^\tr \bX + \lambda \Id_n) (\bbeta^+ - \betaopt) = 0
\]
Since $\bbeta^+ - \bbeta$ is parallel to the $j$-th coordinate vector ${\bf e}_j$, and since $(\bX^\tr \bX + \lambda \Id_n) \betaopt = \bX^\tr \by$, this amounts to verifying that
\[
{\bf e}_j^\tr (\bX^\tr \bX + \lambda \Id_n) \bbeta^+ -  \bX_{(j)}^\tr \by = 0, \text{ or equivalently } \bX_{(j)}^\tr \bX \bbeta^+ + \lambda \bbeta^+_j -  \bX_{(j)}^\tr \by = 0.
\]
This is simply a restatement of 
\[
\frac{\partial}{\partial \bbeta_j}\left[\|\by - \bX \bbeta \|_2^2 + \lambda \|\bbeta\|^2 \right] = 0,
\] which is how the update for $\bbeta_j$ is derived. 

Similarly, $(ii),(ii')$ hold simply because of the definition of the procedure that updates $\bbeta^+$ from $\bbeta$, and $\balpha^+$ from $\balpha$. To see this for $\bbeta$, note that $\bbeta^+$ and $\betaopt$ differ in the $j$-th coordinate with probability $\frac{\|\bX_{(j)}\|^2 + \lambda}{\|\bX\|_F^2 + n\lambda}$, and hence 
\[
\E_t\|\bbeta^+ - \bbeta\|_{\bSigma'}^2 = \sum_j \frac{\|\bX_{(j)}\|^2 + \lambda}{\|\bX\|_F^2 + n\lambda} (\bbeta_j^+ - \bbeta_j)\bSigma'_{jj}(\bbeta_j^+ - \bbeta_j)
\]
Now, note that $\bSigma'_{jj} = \|\bX_{(j)}\|^2 + \lambda$. Then, as an immediate consequence of update \eqref{eq:RGSridge1}, we obtain the equality $(ii')$.

This concludes the proof the theorem. \hfill$\square$
\end{proof}

\section{Suboptimal RK/RGS Algorithms for Ridge Regression}\label{sec:variants}

%Rather than solving the simple OLS problem, we now turn to the problem \eqref{eq:syseq} of ridge regression.  

% It is well known that the solution to
% \begin{equation}
% \min_{\bbeta\in\mathbb{R}^\cols } \|\by-\bX\bbeta\|^2 + \lambda \|\bbeta\|^2
% \end{equation}
% can be given in two equivalent forms (using the covariance and gram matrices) as
% \begin{eqnarray}
% \bbeta_{RR} &=& (\bX^\tr \bX + \lambda \Id_n)^{-1}\bX^\tr \by \quad
% \text{and also } = \bX^* \balpha_{RR}\\
% \text{~ where ~}~  \balpha_{RR} &=& (\bX\bX^\tr + \lambda \Id_m)^{-1}\by.  
% \end{eqnarray}

One can  view  $\bbeta_{RR}$ and $\balpha_{RR}$ simply as solutions to the two linear systems 
$$(\bX^\tr \bX + \lambda \Id_n)\bbeta = \bX^\tr \by \quad\text{and}\quad 
(\bX \bX^\tr + \lambda \Id_m) \balpha = \by.$$ 
If we naively use RK or RGS on either of these systems (treating them as solving $\bf{A} x=\bf{b}$ for some given $\bA$ and $\bf b$), then we may apply the bounds \eqref{RVbound} and \eqref{LLbound} to the matrix $\bX^\tr\bX + \lambda\Id_n$ or $\bX\bX^\tr + \lambda\Id_m$.  This, however, yields a bound on the convergence rate which depends on the \textit{squared} scaled condition number of $\bX^\tr\bX + \lambda\Id$, which is approximately the \textit{fourth power} of the scaled condition number of $\bX$.  This dependence is suboptimal, so much so that it becomes highly impractical to solve large scale problems using these methods.  This is of course not surprising since this naive solution does not utilize any structure of the ridge regression problem (for example, the aforementioned matrices are positive definite).  One thus searches for more tailored approaches --- indeed,
our proposed RK and RGS updates whose computation are still only $O(\cols)$ or $O(\rows)$ per iteration and yield linear convergence with dependence only on the scaled condition number of $\bX^\tr\bX + \lambda\Id_n$ or $\bX\bX^\tr + \lambda\Id_m$, and not their square.  The aforementioned updates and their convergence rates are motivated by a clear understanding of how RK and RGS methods relate to each other as in \cite{MaConvergence15} and jointly to positive semi-definite systems of equations. However, these are not the only options that one has. Indeed, we may wonder if an algorithm that suitably randomizes over \textit{both} rows and columns 

\subsection{Augmented Projection Method (IZ)}

We now describe a creative proposition by Ivanov and Zhdanov \cite{ivanov2013kaczmarz}, which we refer to as the \textit{augmented projection method} or \textit{Ivanov-Zhdanov method} (IZ).
We consider the regularized normal equations of the system \eqref{eq:syseq}, as demonstrated in \cite{zhdanov2012method,ivanov2013kaczmarz}. Here, the authors recognize that the solution to the system \eqref{eq:syseq} can be given by
\[
\left( 
\begin{array}{cc}
\sqrt{\lambda} \Id_m & \bX\\
\bX^\tr & -\sqrt \lambda \Id_n
\end{array}
\right)
\left( 
\begin{array}{c}
\balpha' \\
\bbeta
\end{array}
\right)
=
\left( 
\begin{array}{c}
\by \\
\bf 0_n 
\end{array}
\right).
\]
Here we use $\balpha'$ to differentiate this variable from $\balpha$, the variable involved in the ``dual'' system $(\bK+\lambda \Id_m) \balpha = y$ --- note that $\balpha'$ and $\balpha$ just differ by a constant factor $\sqrt{\lambda}$. The authors propose to solve the system \eqref{eq:syseq} by applying the Kaczmarz algorithm (and in their experiments, they apply Randomized Kacmarz) to the aforementioned system.
As they mention, the advantage of rewriting it in this fashion is that the condition number of the $(m+n)\times (m+n)$ matrix
$$
\bf{A} := \left( 
\begin{array}{cc}
\sqrt{\lambda} \Id_m & \bX\\
\bX^\tr & -\sqrt \lambda \Id_n
\end{array}
\right)
$$
is the square-root of the condition number of the $n \times n$ matrix $\bX^\tr \bX + \lambda \Id_n$. Hence, the RK algorithm on the aforementioned system converges an order of magnitude faster than running RK on \eqref{eq:syseq} using the matrix $\bX^\tr \bX + \lambda \Id_n$.

\subsection{The IZ algorithm effectively randomizes over \textit{both} rows and columns}

Let us look at what the IZ algorithm does in more detail.  
The two sets of equations are:
\begin{align}
\sqrt \lambda \balpha' + \bX \bbeta = \by \quad\text{and}\quad 
\bX^\tr \balpha' = \sqrt \lambda \bbeta.
\label{eq:iz_cond}
\end{align}
First note that the first $m$ rows of $\bA$ correspond to rows of $\bX$ and have a squared norm $\|\bX^i\|^2 + \lambda$ and the next $n$ rows of $\bA$ correspond to columns of $\bX$ and have a norm $\|\bX_{(j)}\|^2 + \lambda$. Hence, $\|{\bA}\|_F^2 = 2\|\bX\|_F^2 + (m+n)\lambda$. 

This means one can interpret picking a random row of the $(m+n)\times (m+n)$ matrix $\bA$ (with probability proportional to its row norm) as a two step process. If one of the first $m$ rows of $\bA$ are picked, we are effectively doing a ``row update'' on $\bX$. If one of the last $n$ rows of $\bA$ are picked, we are effectively doing a ``column update'' on $\bX$. Hence, we are effectively randomly choosing between doing ``row updates'' or ``column updates'' on $\bf X$ (choosing to do a row update with probability $\frac{\|\bX\|_F^2 + m\lambda}{2\|\bX\|_F^2 + (m+n)\lambda}$ and a column update otherwise). 

If we choose to do ``row updates'', we then choose a random row of $\bf X$ (with probability proportional to $\frac{\|\bX^i\|^2 + \lambda}{\|\bX\|_F^2 + m\lambda}$ as done by RK). If we choose to do ``column updates'', we then choose a random column of $\bf X$ (with probability proportional to $\frac{\|\bX_{(j)}\|^2 + \lambda}{\|\bX\|_F^2 + n\lambda}$ as done by RGS).

If one selects a random row $i \leq \rows$ with probability proportional to $\|\bX^i\|^2 + \lambda$, the equation we greedily satisfy is
$$
\sqrt \lambda \be_{(i)}^\tr \balpha' + \bX^i \bbeta = y^i
$$
using the update %in O(n+p) time
\begin{align}
(\balpha'_{t+1}, \bbeta_{t+1}) = (\balpha'_t,\bbeta_t) + \frac{ y^i - \sqrt \lambda \be_{(i)}^\tr \balpha'_t - \bX^i \bbeta_t}{\|\bX^i\|^2 + \lambda}(\sqrt \lambda \be_{(i)}, \bX^i),
\label{eq:iz_row}
\end{align}
which can be computed in $O(\rows+\cols)$ time.  Similarly, if a random column $j\leq \cols$ is selected with probability proportional to $\|\bX_{(j)}\|^2 + \lambda$, the equation we greedily satisfy is
$$
\bX_{(j)}^\tr \balpha' = \sqrt{\lambda} \be_{(j)}^\tr \bbeta
$$
with the update in $O(\rows+\cols)$ time of
\begin{align}
(\balpha_{t+1}', \bbeta_{t+1}) = (\balpha_t',\bbeta_t) + \frac{ \sqrt{\lambda} \be_{(j)}^\tr \bbeta_t - \bX_{(j)}^\tr \balpha'_t}{\|\bX_{(j)}\|^2 + \lambda}(\bX_{(j)}^\tr, -\sqrt \lambda \be_{(j)}).
\label{eq:iz_col}
\end{align}

Next, we further study the behavior of this method under different initialization conditions. %, which the authors called the \textit{augmented projection method}.
%\dn{The guarantee will be on $\|\alpha^T - \alpha^*\|^2 + \|\beta^T - \beta^*\|^2$ (both measured in some appropriate norm, I have to see which)}

\subsection{The Wasted Iterations of the IZ Algorithm} % I think we want to be careful not to use Ivanov and Zhandov's names too much here since it is mostly a negative argument. I'm not sure what else to call it, I've written "augmented projection" perhaps? -dn

The augmented projection method attempts to find $\balpha'$ and $\bbeta$
that satisfy conditions \eqref{eq:iz_cond}.
It is insightful to examine the behavior of that approach when one of these conditions is already satisfied. 

\begin{claim}\label{clm:IZ1}
Assume $\balpha'_0$ and $\bbeta_0$ are initialized such that
$$
\bbeta_0 = \frac{\bX^\tr \balpha_0'}{\sqrt{\lambda}}.
$$
(for example, all zeros). Then:
\begin{enumerate}
\item The update equation \eqref{eq:iz_row} 
is an RK-style update on $\balpha$.
\item The condition $\bbeta_t = \frac{\bX^\tr \balpha_t'}{\sqrt{\lambda}}$ is automatically maintained for all $t$.
\item Update equation \eqref{eq:iz_col} has absolutely no effect.
\end{enumerate}
\end{claim}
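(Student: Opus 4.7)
The plan is to address the three items of Claim~\ref{clm:IZ1} in reverse order, because the algebraic invariant $\sqrt{\lambda}\,\bbeta_t = \bX^\tr \balpha_t'$ is what pins everything down: once it is shown to hold, items (3) and (1) are one-line substitutions. First I would verify item (3) by direct substitution. Under the invariant, $\sqrt{\lambda}\,\be_{(j)}^\tr \bbeta_t$ coincides with $\be_{(j)}^\tr \bX^\tr \balpha_t' = \bX_{(j)}^\tr \balpha_t'$, so the numerator of the step size in \eqref{eq:iz_col} vanishes identically and the column update is the identity map on $(\balpha_t',\bbeta_t)$.

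Next I would prove item (2) by induction on $t$, with the base case given by the hypothesis. For the inductive step, any column update trivially preserves the invariant by item (3). For a row update, I would apply $\bX^\tr$ to the $\balpha'$-component of \eqref{eq:iz_row} and use the identity $\bX^\tr \be_{(i)} = (\bX^i)^\tr$ together with the inductive hypothesis $\bX^\tr \balpha_t' = \sqrt{\lambda}\,\bbeta_t$; after dividing by $\sqrt{\lambda}$, the result coincides with the $\bbeta$-component of \eqref{eq:iz_row}, establishing $\bX^\tr \balpha_{t+1}' = \sqrt{\lambda}\,\bbeta_{t+1}$.

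Finally, for item (1), I would use the rescaling $\balpha' = \sqrt{\lambda}\,\balpha$ noted in the paper's introduction to the IZ method and substitute into the $i$-th coordinate of the $\balpha'$-update in \eqref{eq:iz_row}. The only non-trivial simplification is $\sqrt{\lambda}\,\be_{(i)}^\tr \balpha_t' = \lambda\,\alpha_t^i$; dividing by $\sqrt{\lambda}$ then delivers exactly the RK-style update \eqref{eq:kacz1}-\eqref{eq:kacz2} for the dual system $(\bK+\lambda\Id_m)\balpha = \by$. The claim is deterministic (no expectations or probabilistic arguments are needed; it holds for every realization of the random index), and the only place where care is required is keeping the $\sqrt{\lambda}$-rescaling between $\balpha$ and $\balpha'$ consistent throughout — which I view as the sole, modest obstacle. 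Once that bookkeeping is in place, all three items reduce to short algebraic identities.
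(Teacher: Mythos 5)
Your proposal is correct and takes essentially the same route as the paper's proof: an induction on the invariant $\sqrt{\lambda}\,\bbeta_t = \bX^\tr\balpha_t'$, under which the numerator in \eqref{eq:iz_col} vanishes and the row update \eqref{eq:iz_row} reduces (via the rescaling $\balpha' = \sqrt{\lambda}\,\balpha$) to the RK update \eqref{eq:kacz1}--\eqref{eq:kacz2}. The only difference is that you handle the three items in reverse order, which is purely organizational.
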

%\section{Proof of Claim \ref{clm:IZ1}}\label{appsec:proof1}

\begin{proof}
Suppose at some iteration $\bbeta_t = \frac{\bX^\tr \balpha'_t}{\sqrt{\lambda}}$ holds. Then assuming we do a row update, substituting this into \eqref{eq:iz_row} gives, for the $i$th variable being updated,
$$
\alpha'^i_{t+1} = \alpha'^i_t + \frac{y^i \sqrt{\lambda} - \lambda \alpha'^i_t - \bX^i \bX^\tr \balpha' }{\|\bX^i\|^2 + \lambda} = \frac{\|\bX^i\|^2}{\|\bX^i\|^2 + \lambda} \alpha'^i_t
+ \frac{y^i \sqrt{\lambda} - \bX^i \bX^\tr \balpha'}{\|\bX^i\|^2 + \lambda},
$$
which (as we will later see in more detail) can be viewed as an RK-style update on $\balpha$.
% derived from the RGS
%update rule for the positive semidefinite system
%$$
%(\bK + \lambda \Id) \balpha' = \sqrt{\lambda} \by.
%$$
%using updates from \citet{LL10:Randomized-Methods}. 
The parallel update to $\bbeta$ can then be rewritten as
%\ar{add one more line showing why}
$$
\bbeta_{t+1} = \bbeta_t + \frac{y^i - \sqrt{\lambda} \alpha'^i_t - \bX^i \bX^\tr \balpha' }{\|\bX^i\|^2 + \lambda} \bX^i  = \bbeta_t + \frac{\alpha'^i_{t+1} - \alpha'^i_t}{\sqrt{\lambda}} \bX^i,
$$
which automatically keeps condition $\bbeta = \frac{\bX^\tr \balpha'}{\sqrt{\lambda}}$ satisfied.
Since this condition is already satisfied, we have that $\sqrt{\lambda} \be_{(j)}^\tr \bbeta_t = \bX_{(j)}^\tr \balpha'_t$.  Thus, if we then run any column update from
 \eqref{eq:iz_col}, the numerator of the additive term is zero and we get $$(\balpha'_{t+1}, \bbeta_{t+1}) = (\balpha'_t, \bbeta_t).$$ 
\hfill$\square$
\end{proof}

%\section{Proof of Claim \ref{clm:IZ2}} \label{appsec:proof2}

\begin{claim}\label{clm:IZ2}
Assume $\balpha'_0$ and $\bbeta_0$ are initialized such that
$$
\balpha_0' = \frac{\by - \bX\bbeta_0}{\sqrt{\lambda}}.
$$
(for example, $\bbeta_0$ is zero, $\balpha'_0 = \by/\sqrt \lambda$). Then:
\begin{enumerate}
\item The update equation \eqref{eq:iz_col} 
is an RGS-style update on $\bbeta$.
\item The condition $\balpha_t' = \frac{\by - \bX\bbeta_t}{\sqrt{\lambda}}$ is automatically maintained for all $t$.
\item Update equation \eqref{eq:iz_row} has absolutely no effect.
\end{enumerate}
\end{claim}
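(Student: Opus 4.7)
The plan is to mirror the proof of Claim~\ref{clm:IZ1} exactly, swapping the roles of rows and columns, and of the two invariants $\bbeta_t = \bX^\tr\balpha'_t/\sqrt\lambda$ and $\balpha'_t = (\by - \bX\bbeta_t)/\sqrt\lambda$. The whole exercise reduces to plugging the assumed invariant into the row/column updates \eqref{eq:iz_row} and \eqref{eq:iz_col} and watching the $\sqrt\lambda$'s and signs line up to recover the RGS update \eqref{eq:RGSridge1}.

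First I would dispatch item 3. Observe that the invariant $\sqrt\lambda\,\balpha'_t + \bX\bbeta_t = \by$ is exactly the statement that every one of the first $m$ equations of the augmented system is satisfied coordinate-wise: for each $i\le m$, $\sqrt\lambda\,\alpha'^{i}_t + \bX^i \bbeta_t = y^i$. Therefore the numerator in the row update \eqref{eq:iz_row} vanishes identically, so any row update is a no-op.

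Next I would handle item 1. Let $c := (\sqrt\lambda\,\beta^j_t - \bX_{(j)}^\tr\balpha'_t)/(\|\bX_{(j)}\|^2 + \lambda)$ denote the scalar multiplier in \eqref{eq:iz_col}. Then $\beta^j_{t+1} = \beta^j_t - \sqrt\lambda\,c$. I substitute the invariant $\sqrt\lambda\,\balpha'_t = \by - \bX\bbeta_t$ into the definition of $c$ to convert $\sqrt\lambda\,\bX_{(j)}^\tr\balpha'_t$ into $\bX_{(j)}^\tr(\by - \bX\bbeta_t)$, producing
\[
\beta^j_{t+1} = \beta^j_t + \frac{\bX_{(j)}^\tr\by - \bX_{(j)}^\tr\bX\bbeta_t - \lambda\beta^j_t}{\|\bX_{(j)}\|^2 + \lambda},
\]
which is precisely the RGS ridge-regression update \eqref{eq:RGSridge1}. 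For item 2, I would verify directly that after a column update,
\[
\sqrt\lambda\,\balpha'_{t+1} = \sqrt\lambda\,\balpha'_t + c\sqrt\lambda\,\bX_{(j)} = (\by - \bX\bbeta_t) + c\sqrt\lambda\,\bX_{(j)},
\]
while $\bX\bbeta_{t+1} = \bX\bbeta_t + (\beta^j_{t+1} - \beta^j_t)\bX_{(j)} = \bX\bbeta_t - c\sqrt\lambda\,\bX_{(j)}$, so the two sides agree and the invariant propagates.

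The steps are all routine linear algebra; there is no real ``hard'' part. The only minor source of friction I anticipate is bookkeeping the $\sqrt\lambda$ factors and the sign convention on the $\bbeta$-component of \eqref{eq:iz_col} (a $-\sqrt\lambda\,\be_{(j)}$, whereas the $\balpha'$-component picks up $+\bX_{(j)}$), which is what makes the subtractive update on $\bbeta$ reproduce RGS rather than a misscaled variant. As in Claim~\ref{clm:IZ1}, the natural order is items 3 then 1 then 2, since (3) follows purely from the invariant, (1) follows from the invariant plus one substitution, and (2) is the inductive closure needed to extend the argument from a single iteration to all $t$.
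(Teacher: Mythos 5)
Your proposal is correct and follows essentially the same route as the paper's proof: substitute the invariant $\sqrt{\lambda}\,\balpha'_t = \by - \bX\bbeta_t$ into the column update \eqref{eq:iz_col} to recover the RGS update \eqref{eq:RGSridge1}, check that the $\balpha'$-component of the update preserves the invariant, and observe that the invariant makes the numerator of every row update \eqref{eq:iz_row} vanish. The only difference is cosmetic (ordering the items 3, 1, 2 and writing the propagation check on $\bX\bbeta_{t+1}$ rather than on $\balpha'_{t+1}$ coordinate-wise), so nothing further is needed.
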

\begin{proof}
Suppose at some iteration  $\balpha'_t = \frac{\by - \bX\bbeta_t}{\sqrt{\lambda}}$ holds. Then assuming we do a column update, substituting this in \eqref{eq:iz_col} gives, for the $j$th variable being updated
$$
\beta_{t+1}^j = \beta_t^j + \frac{\sqrt{\lambda} {\bX^\tr_{(j)}} \balpha'_t - \lambda \beta_t^j}{\|\bX_{(j)}\|^2 + \lambda} = \beta_t^j + \frac{{\bX^\tr_{(j)}} (\by - \bX \bbeta_t) - \lambda \beta_t^j}{\|\bX_{(j)}\|^2 + \lambda},
$$
which (as we will later see in more detail) is an RGS-style update. The parallel update on 
$\balpha'$ can then be rewritten as
%\ar{add one more line showing why}
$$
\balpha'_{t+1} = \balpha'_t - \frac{{\bX^\tr_{(j)}} \balpha'_t - \sqrt{\lambda} \beta_t^j}{\|\bX_{(j)}\|^2 + \lambda} \bX^\tr_{(j)} = \balpha'_t - \frac{\beta_{t+1}^j - \beta_t^j}{\sqrt{\lambda}} {\bX^\tr_{(j)}},
$$
which automatically keeps the  condition $\balpha' = \frac{\by - \bX\bbeta}{\sqrt{\lambda}}$ satisfied. Since this condition is already satisfied, we have that $y^i - \sqrt \lambda \be_{(i)}^\tr \balpha'_t - \bX^i \bbeta_t = 0$.   Thus, if we then run any row update from \eqref{eq:iz_row} we get
$$(\balpha'_{t+1}, \bbeta_{t+1}) = (\balpha'_t, \bbeta_t).$$ 
\hfill$\square$
\end{proof}

%The proofs of these claims are provided in Appendix \ref{appsec:proof1} and \ref{appsec:proof2} respectively. 
IZ's augmented projection method effectively executes RK-style updates as well as RGS updates. We can think of update \eqref{eq:iz_row} (resp.  
\eqref{eq:iz_col}) as attempting 
to satisfy the first (resp. second) condition of \eqref{eq:iz_cond} 
while maintaining the status of the other condition. 
It is the third item in each of the aforementioned claims that is surprising: each natural starting point for the IZ algorithm satisfies one of the two equations in \eqref{eq:iz_cond}; however, if one of the two equations is already satisfied at the start of the algorithm, then either the row or the column updates will have absolutely no effect through the whole algorithm.
This implies that under typical initial conditions (e.g. $\balpha' = 0, \bbeta = 0$), this approach is prone to executing \textit{many} iterations that make absolutely no progress towards convergence!  
Substituting appropriately into \eqref{RVbound}, one can get very similar convergence rates for the IZ algorithm (except that it bounds the quantity
$\|\balpha'^{T} - \balpha'^{\circ}\|^2 + \|\bbeta^T - \betaopt\|^2 $). However, a large proportion of updates do not perform any action, and we will see  in Section \ref{sec:exp} how this affects empirical convergence.

\section{Empirical Results} \label{sec:exp}

%\section{Simulation Experiments}
We next present simulation experiments to test the performance of 
RK, RGS and IZ (Ivanov and Zhdanov's augmented projection) algorithms in different settings of ridge regression.
For given dimensions $\rows$ and $\cols$, 
We generate a design matrix $\bX = \bU \bS \bV^\top$, 
where $\bU \in \mathbb{R}^{\rows \times k}$
, $\bV \in \mathbb{R}^{\cols \times k}$, and $k = \min(\rows,\cols)$.
Elements of $\bU$ and $\bV$ are generated from a standard Gaussian distribution and then columns are orthonormalized (to control the singular values). %\dn{Do you really mean orthonormalized??} 
%\ah{Yes, I do so so that I can directly control singular values. Is there something wrong ?}
The matrix $\bS$ is a diagonal $k \times k$ matrix of singular values of $\bX$. 
The maximum singular value is 1.0 and the values decay exponentially to $\sigma_{\min}$. The true  parameter vector $\bbeta$ is generated from a multivariate Gaussian distribution with zero mean and identity covariance. The vector $\by$ is generated by adding independent standard Gaussian noise to the coordinates of $\bX \bbeta$.
We use different values of $\rows$, $\cols$, $\lambda$ and $\sigma_{min}$ as listed in Table \ref{tbl:params}. For each configuration of the simulated parameters,
we run RGS and RK and IZ for $10^4$ iterations on a random instance of that configuration and report the Euclidean difference between estimated and optimal parameters  after each 100 iterations. We average the error over 20 regression problems generated from the same configuration.
We used several different initializations for the IZ algorithm as shown in 
Table \ref{tbl:algos}. Experiments where implemented in MATLAB and executed 
on an Intel Core i7-6700K 4GHz machine with 16GB of RAM.

The results are reported in Figures \ref{fig:nep}, 
\ref{fig:ngp} and \ref{fig:nlp}. Figure \ref{fig:nep} shows that 
RGS and RK exhibit similar behavior when $\rows = \cols$. 
Poor conditioning of the design matrix results in slower convergence. However, the effect of conditioning is most apparent when the regularization parameter is small. 
Figures \ref{fig:ngp} and \ref{fig:nlp} show that RGS  
consistently outperforms other methods when $\rows > \cols$ while RK consistently outperforms other methods when $\rows < \cols$. The difference is again most apparent when the regularization parameter is small (note that even when $\lambda=0$, RK is known to converge to the least-norm solution, see e.g. \cite{MaConvergence15}). 
The notably poor performance of RK and RGS in the $\rows > \cols$
and $\cols > \rows$ cases respectively when $\lambda$ is very small 
agrees with Theorem \ref{thm:rates}:
a small $\lambda$ results in an expected error reduction ratio that is very close to 1.

Another error metric that is commonly used in evaluating numerical methods
is $\|X^*X \bbeta_t - X^* y\|$. Intuitively, this metric measures how well the value of 
$\bbeta_t$ fits the system of equations. We examined how the tested methods behave in terms 
of this metric when $\lambda = 10^{-3}$. The results are depicted in Figure \ref{fig:xnrm}.
We see that RGS is achieving very small error in the $\rows < \cols$ case despite its 
poor performance in terms of the solution recovery error $\|\beta_t - \bbeta^o\|$.
This is expected since when $\lambda$ tends to 0, RGS tends to be solving an underdetermined system of equations where the are infinite solutions that fit. 
This is not  the case for the overdetermined case $\rows > \cols$ and therefore we see that RK method is performing poorly in terms of $\|X^*X \bbeta_t - X^* y\|$ 
in that case.

Looking at IZ methods, we notice that IZ0 (resp. IZ1) exhibit similar convergence behavior as that of RK (resp. RGS) although typically slower. This agrees with our analysis which reveals that, depending on the initialization,
IZ can perform RGS or RK-style updates except that some iterations can be ineffective, which causes slower convergence.
Interestingly, IZMIX, where $\alpha$ is initialized midway between IZ0 and IZ1 exhibits convergence behavior that in most cases is in between IZ0 and IZ1.

\begin{table}[ht]
\centering
\begin{tabular}{|c|l|c|}
\hline
Parameter & \multicolumn{1}{|c|}{Definition}   & Values \\
\hline
$(\rows,\cols)$ & \begin{tabular}{l} Dimensions of the design matrix $\bX$ \end{tabular} 
& $(1000, 1000)$, $(10^4, 100)$ , $(100, 10^4)$ \\
\hline
$\lambda$ & \begin{tabular}{l} Regularization parameter \end{tabular}  & $10^{-3},10^{-2},10^{-1}$\\
\hline
$\sigma_{min}$ & 
\begin{tabular}{l}
Minimum singular value of the design matrix \\ ($\sigma_{max} = 1.0$) 
\end{tabular} & $1.0,10^{-1},10^{-2},10^{-3}$\\
\hline
\end{tabular}
\caption{Different parameters used in simulation experiments}
\label{tbl:params}
\end{table}

\begin{table}[ht]
\centering
\begin{tabular}{|c|p{10cm}|}
\hline
Algorithm & Description \\
\hline
RGS & Randomized Gauss-Siedel updates using \eqref{eq:RGSridge1} with initialization $\beta_0 = 0$\\
\hline
RK & Randomized Kaczmarz updates using \eqref{eq:kerkacz3} with initialization
 $\alpha_0 = 0$ \\
\hline
IZ0 & Ivanov and Zhdanov's augmented projection method with $\balpha_0 = 0, \bbeta_0 = 0$ \\
\hline
IZ1 & Ivanov and Zhdanov's augmented projection method with $\balpha_0 = \by / \sqrt{\lambda}, \bbeta_0 = 0$ \\
\hline
IZMIX & Ivanov and Zhdanov's augmented projection method with $\balpha_0 = \by / 2\sqrt{\lambda}, \bbeta_0 = 0$ \\
\hline
IZRND & Ivanov and Zhdanov's augmented projection method with elements of $\bbeta_0$ and $\balpha_0$ randomly drawn from a standard normal distribution \\
\hline
\end{tabular}
\caption{List of algorithms compared in simulation experiment.}
\label{tbl:algos}
\end{table}

%%%%%%%%%%%%%%%%%%%%%%%%%%%%%%%%%%%%%%%%%%%%%%%%%%%%%%%%%%%%%%%%%%%%%%%%%
%%%%%%%%%%%%%%%%%%%%%%%%%%%%%%%%%%%%%%%%%%%%%%%%%%%%%%%%%%%%%%%%%%%%%%%%%
\begin{figure}[p]
\centering
%\begin{tabular}{|c|}
%\hline \\
\includegraphics[scale=0.3]{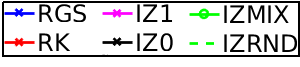} %\\
%\hline 
%\end{tabular}
\begin{tabular}{m{1cm} >{\centering\arraybackslash}m{5cm} >{\centering\arraybackslash}m{5cm} >{\centering\arraybackslash}m{5cm}}
$\sigma_{min}$ & $\lambda=10^{-3}$ & $\lambda=10^{-2}$ & $\lambda=10^{-1}$ \\
1.0 &  \includegraphics[scale=0.35]{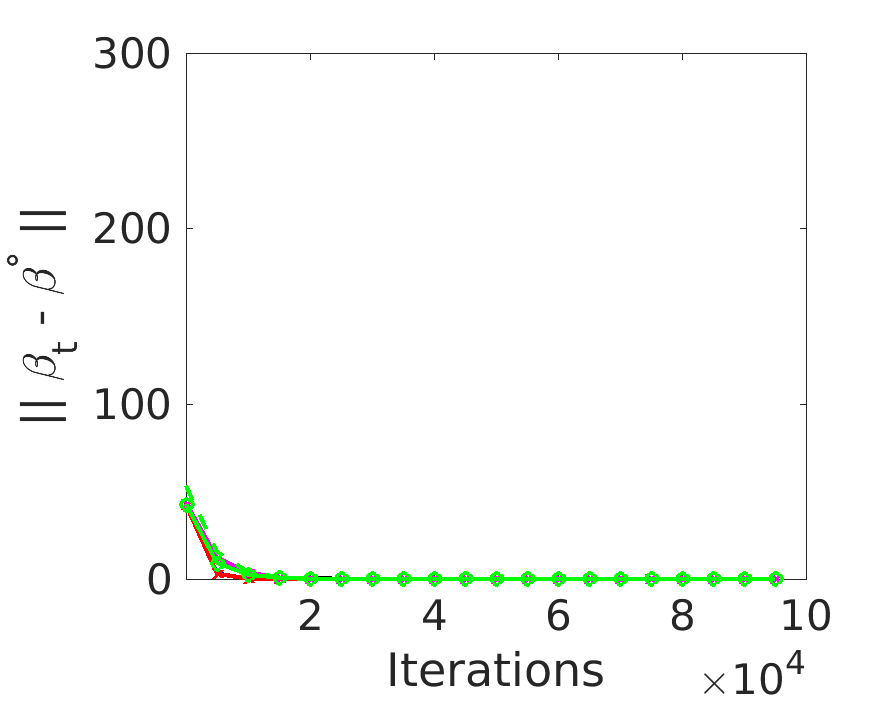}
& \includegraphics[scale=0.35]{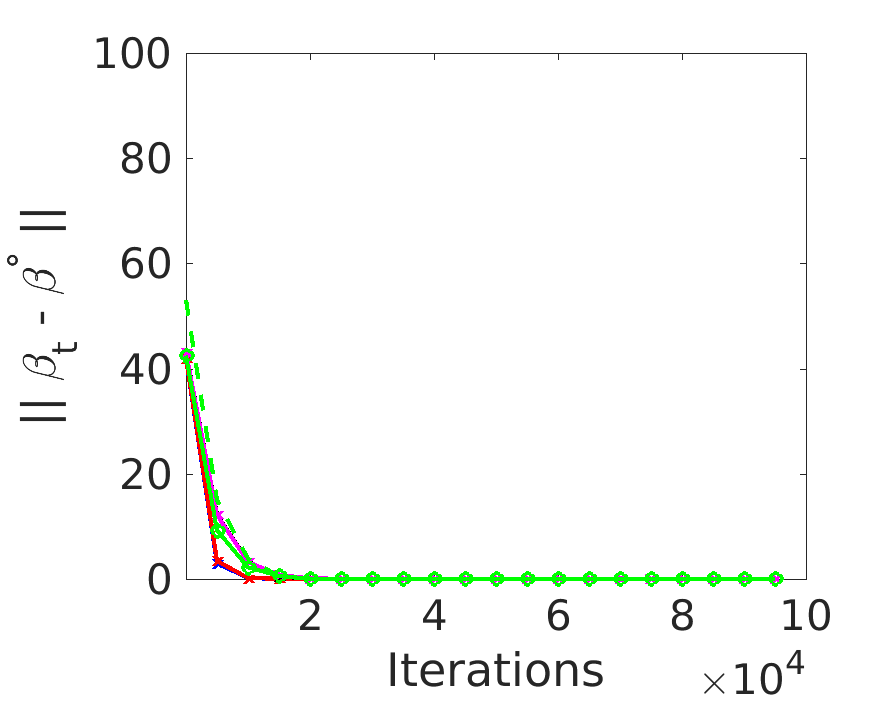}
& \includegraphics[scale=0.35]{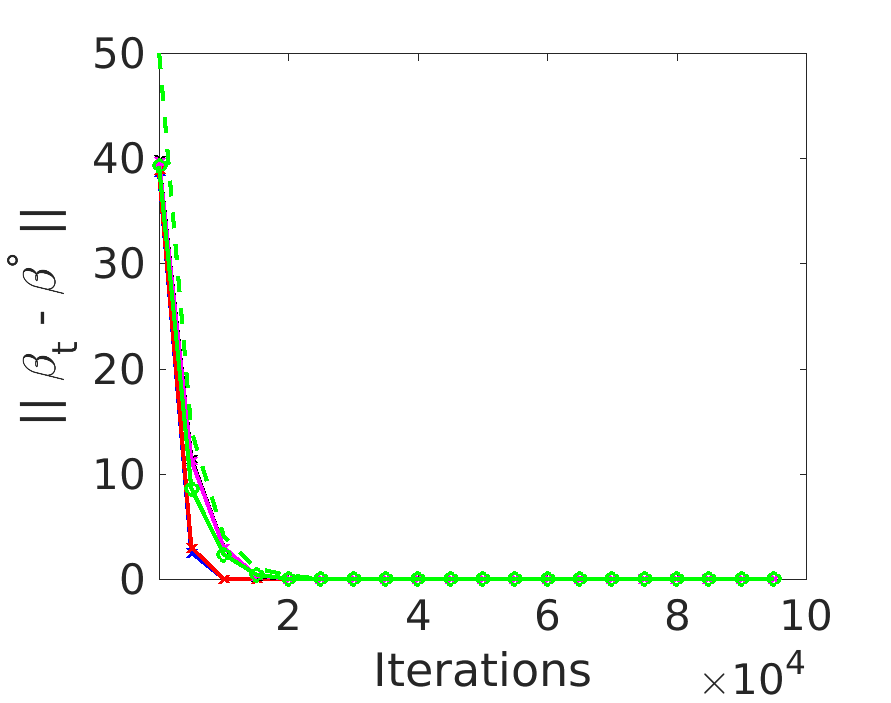} \\
%%%%%%%%%%%%%%%%%%%%%%%%%%%%%%%%%%%%%%%%%%%%%%%%%%%
$10^{-1}$ &  \includegraphics[scale=0.35]{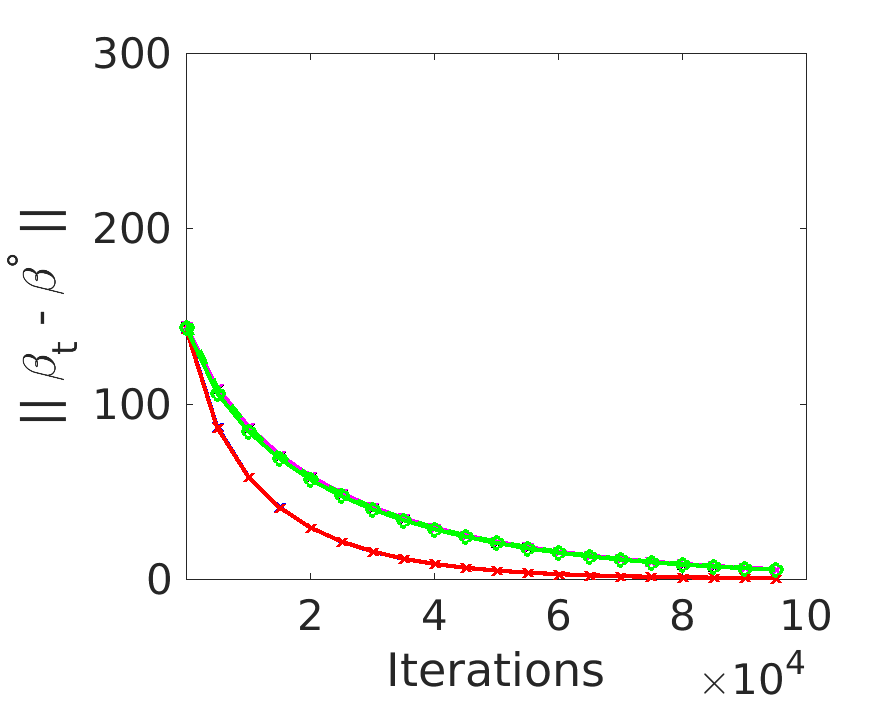}
& \includegraphics[scale=0.35]{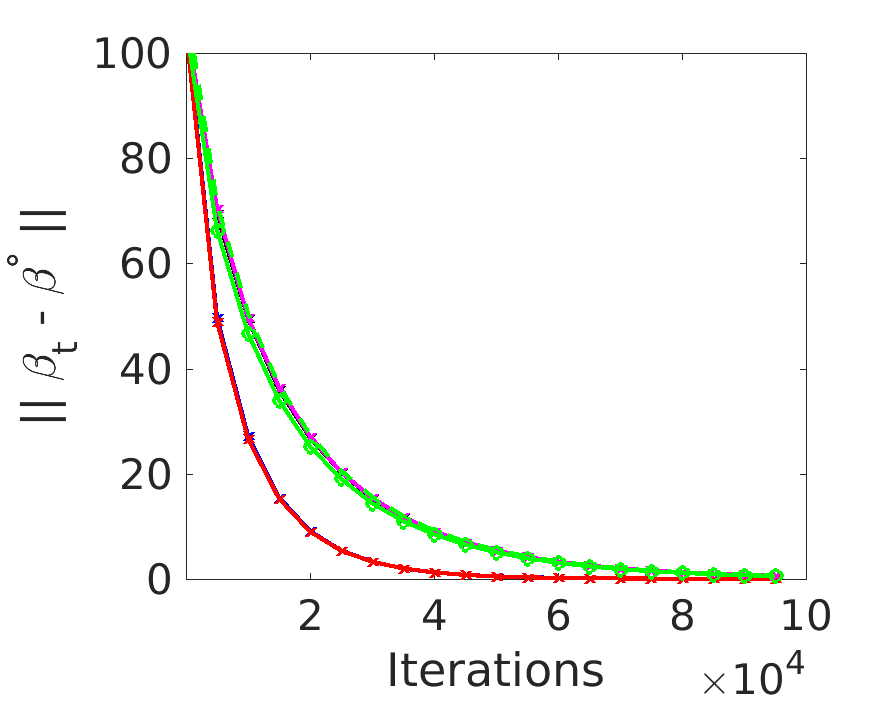}
& \includegraphics[scale=0.35]{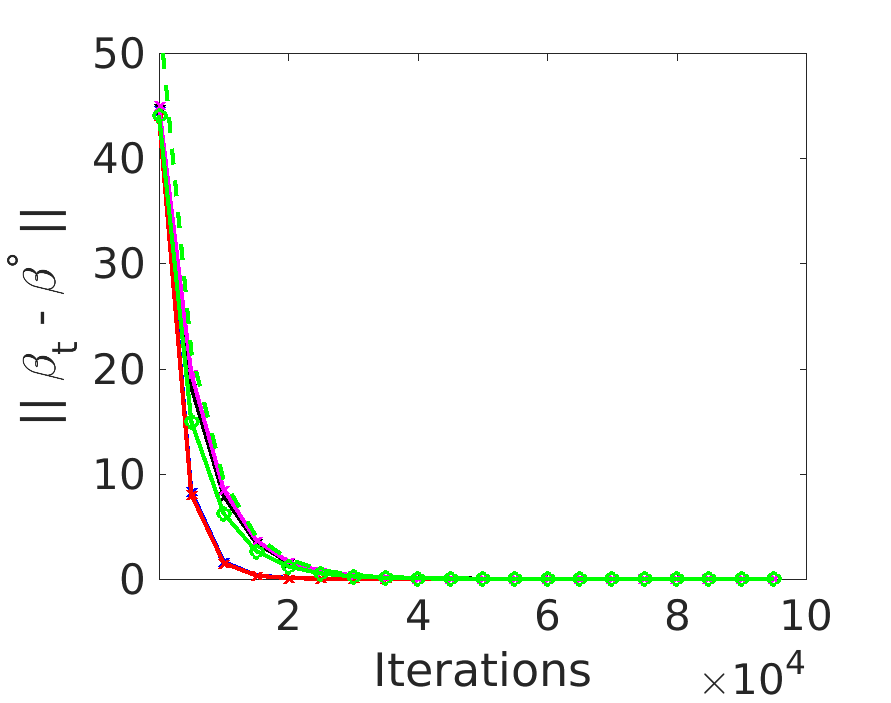} \\
%%%%%%%%%%%%%%%%%%%%%%%%%%%%%%%%%%%%%%%%%%%%%%%%%%%
$10^{-2}$ &  \includegraphics[scale=0.35]{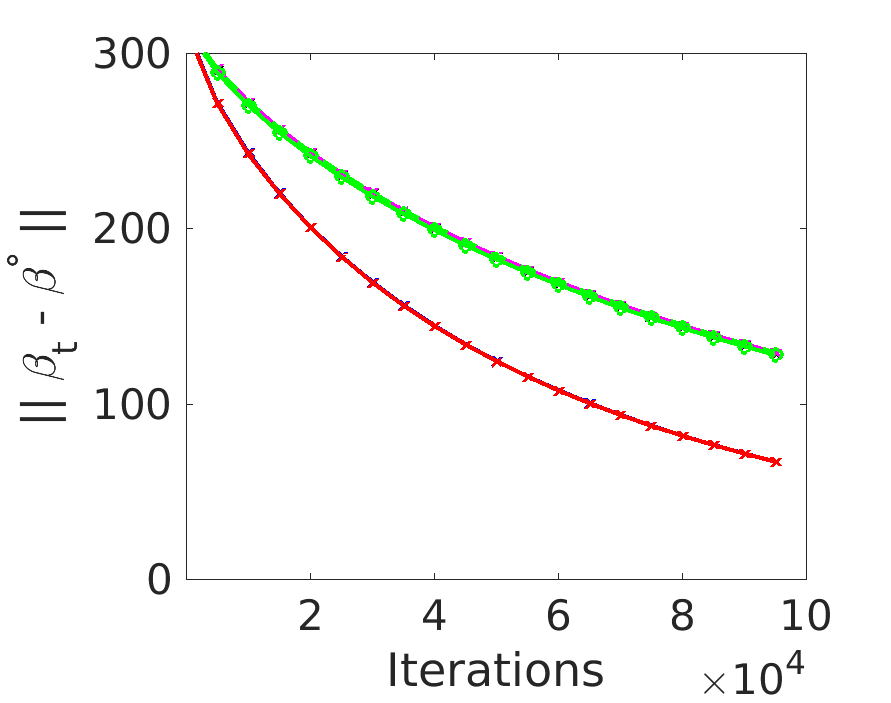}
& \includegraphics[scale=0.35]{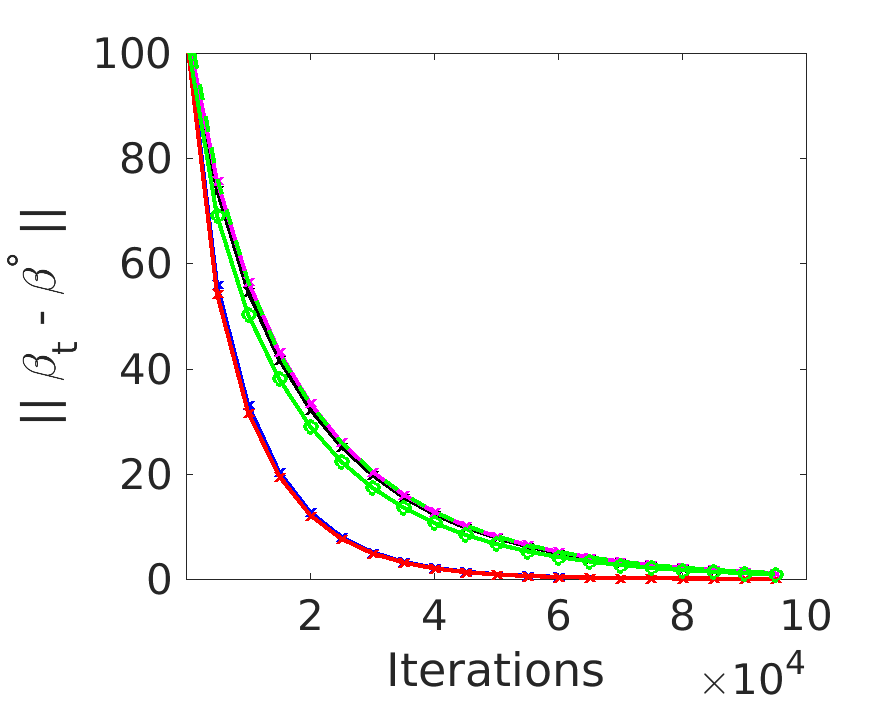}
& \includegraphics[scale=0.35]{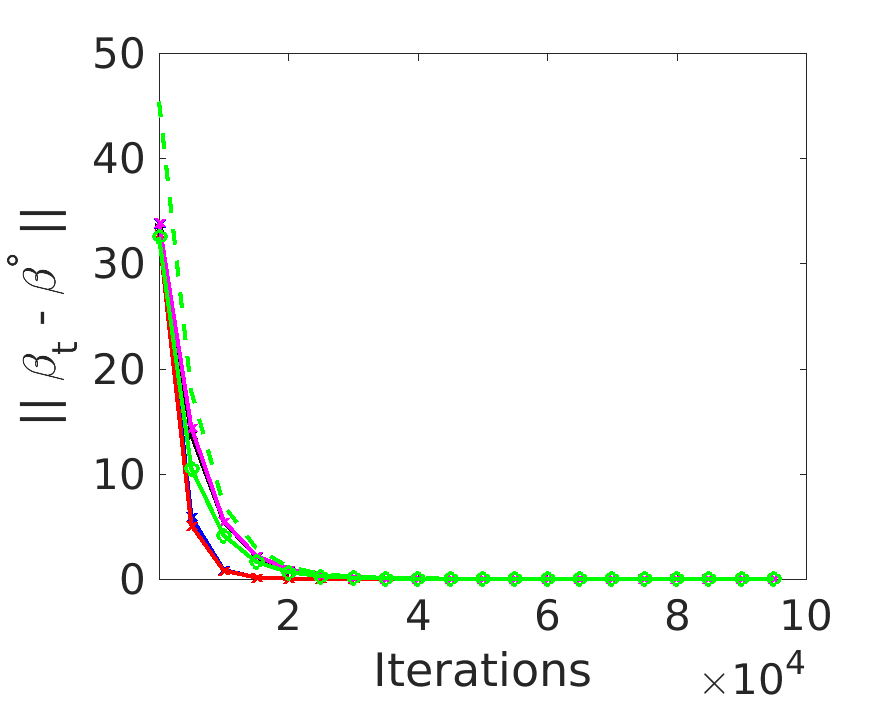} \\
%%%%%%%%%%%%%%%%%%%%%%%%%%%%%%%%%%%%%%%%%%%%%%%%%%%
$10^{-3}$ &  \includegraphics[scale=0.35]{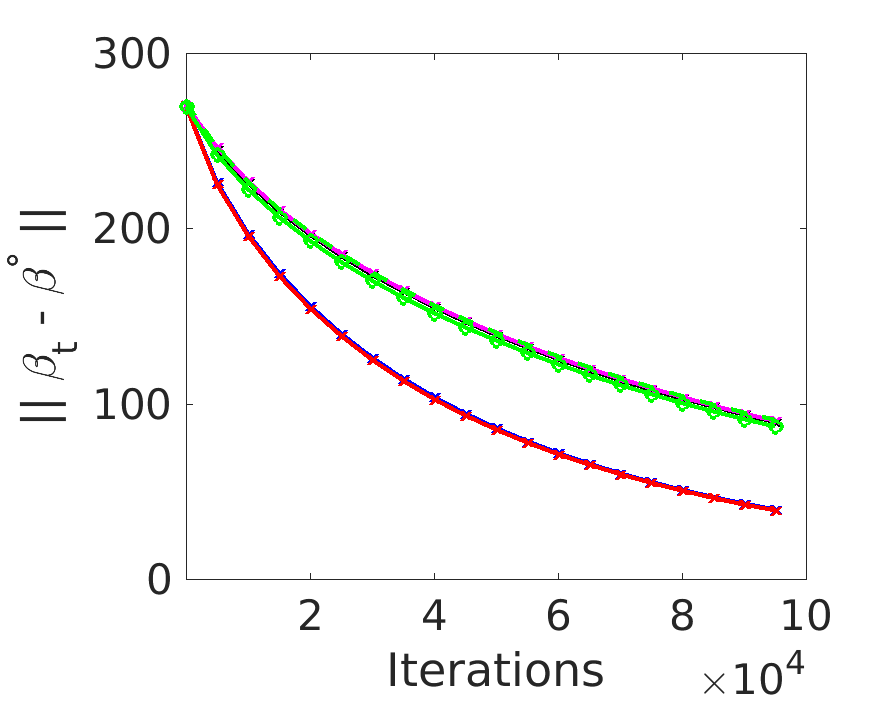}
& \includegraphics[scale=0.35]{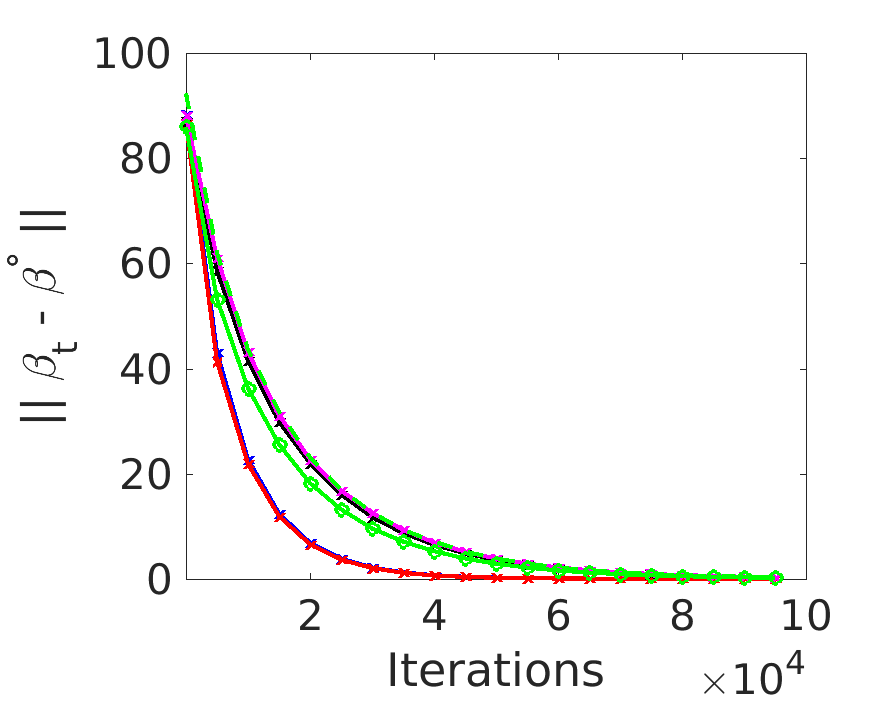}
& \includegraphics[scale=0.35]{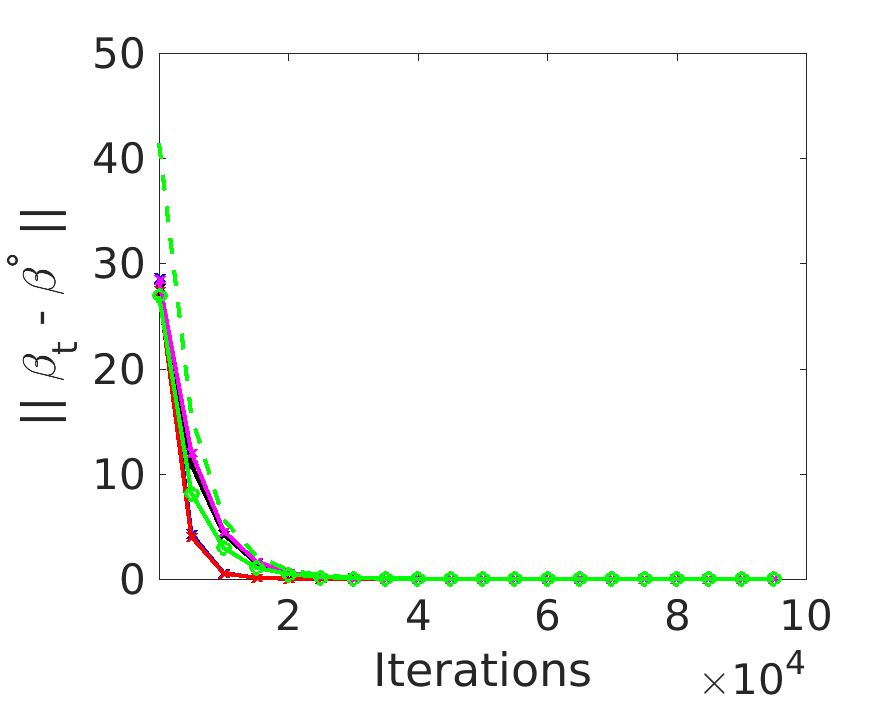} \\
%%%%%%%%%%%%%%%%%%%%%%%%%%%%%%%%%%%%%%%%%%%%%%%%%%%
\end{tabular}
\caption{Simulation results for $\rows = \cols = 1000$: Euclidean error $\|\bbeta_t - \betaopt\|$ versus iteration count.}
\label{fig:nep}
\end{figure}
%%%%%%%%%%%%%%%%%%%%%%%%%%%%%%%%%%%%%%%%%%%%%%%%%%%%%%%%%%%%%%%%%%%%%%%%%
%%%%%%%%%%%%%%%%%%%%%%%%%%%%%%%%%%%%%%%%%%%%%%%%%%%%%%%%%%%%%%%%%%%%%%%%%
\begin{figure}[p]
\centering
%\begin{tabular}{|c|}
%\hline \\
\includegraphics[scale=0.3]{legend.png} %\\
%\hline 
%\end{tabular}
\begin{tabular}{m{1cm} >{\centering\arraybackslash}m{5cm} >{\centering\arraybackslash}m{5cm} >{\centering\arraybackslash}m{5cm}}
$\sigma_{min}$ & $\lambda=10^{-3}$ & $\lambda=10^{-2}$ & $\lambda=10^{-1}$ \\
1.0 &  \includegraphics[scale=0.35]{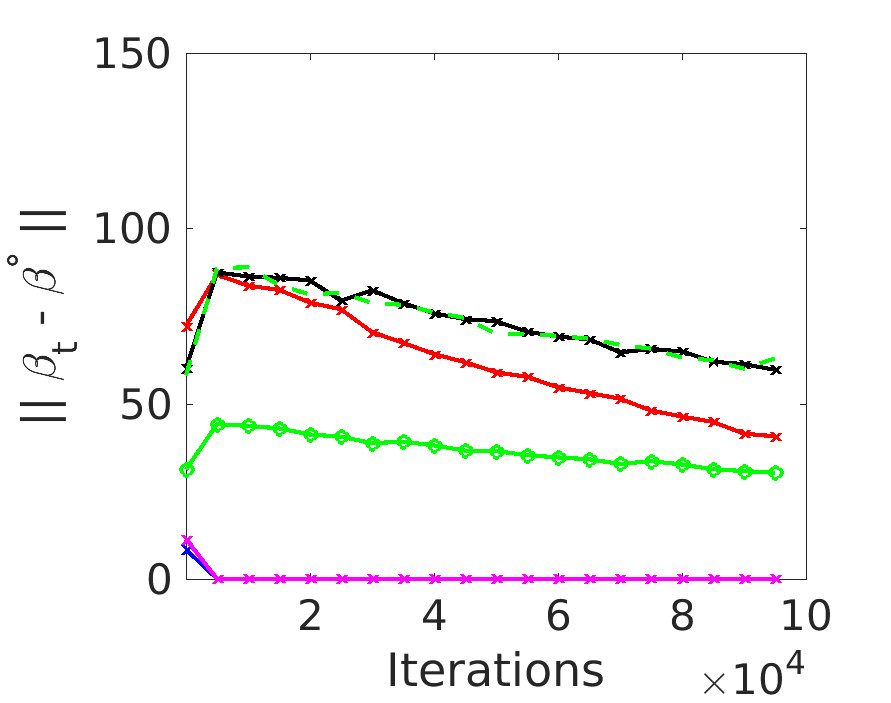}
& \includegraphics[scale=0.35]{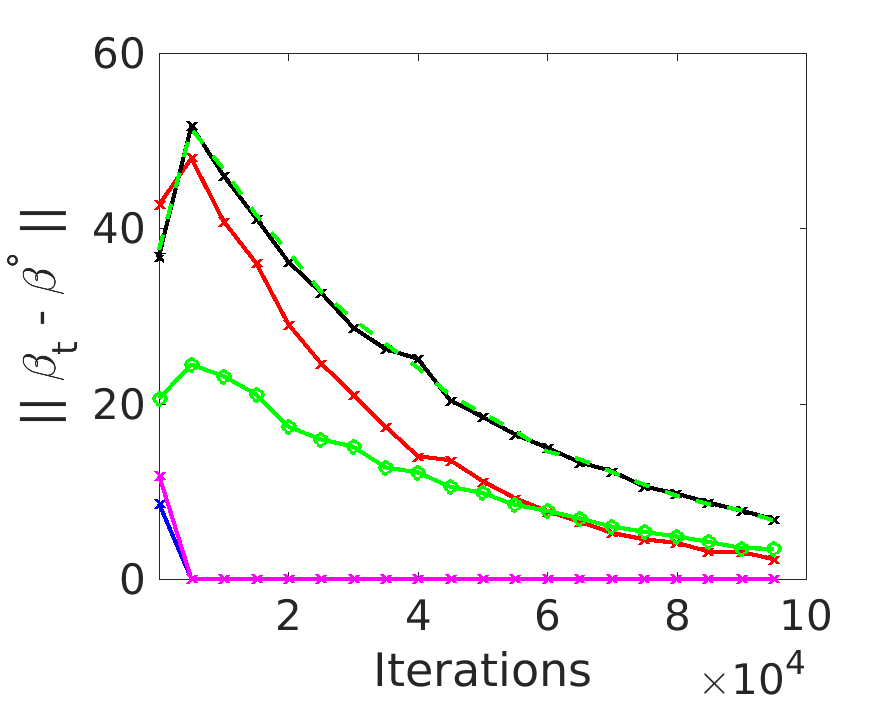}
& \includegraphics[scale=0.35]{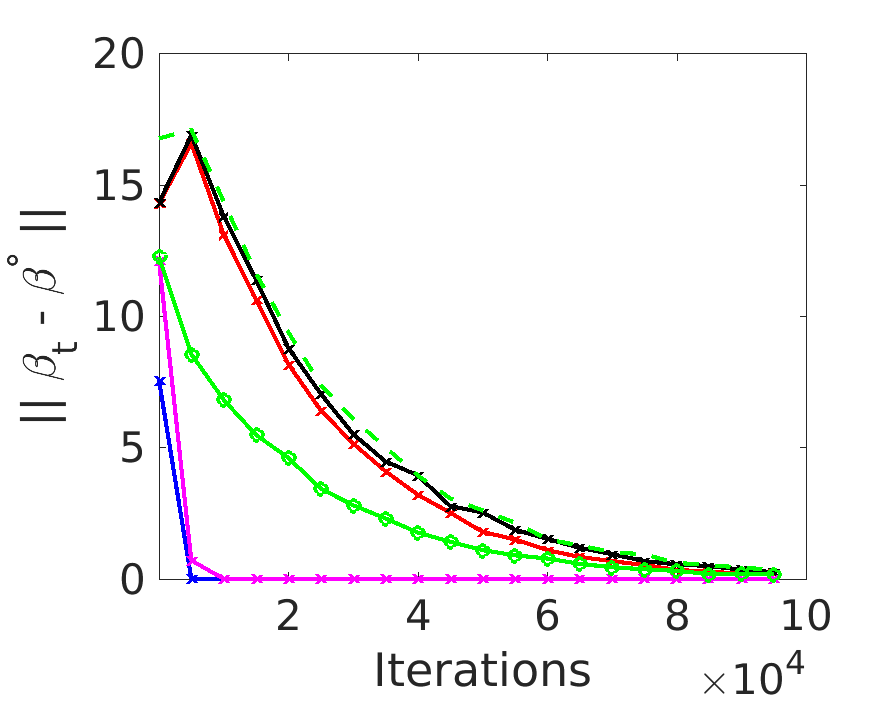} \\
%%%%%%%%%%%%%%%%%%%%%%%%%%%%%%%%%%%%%%%%%%%%%%%%%%%
$10^{-1}$ &  \includegraphics[scale=0.35]{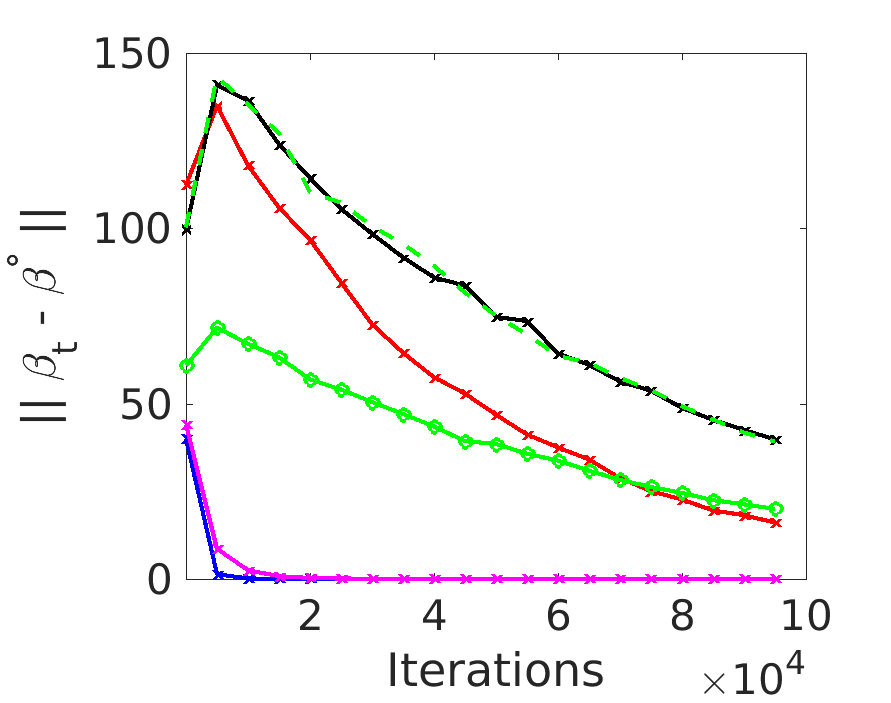}
& \includegraphics[scale=0.35]{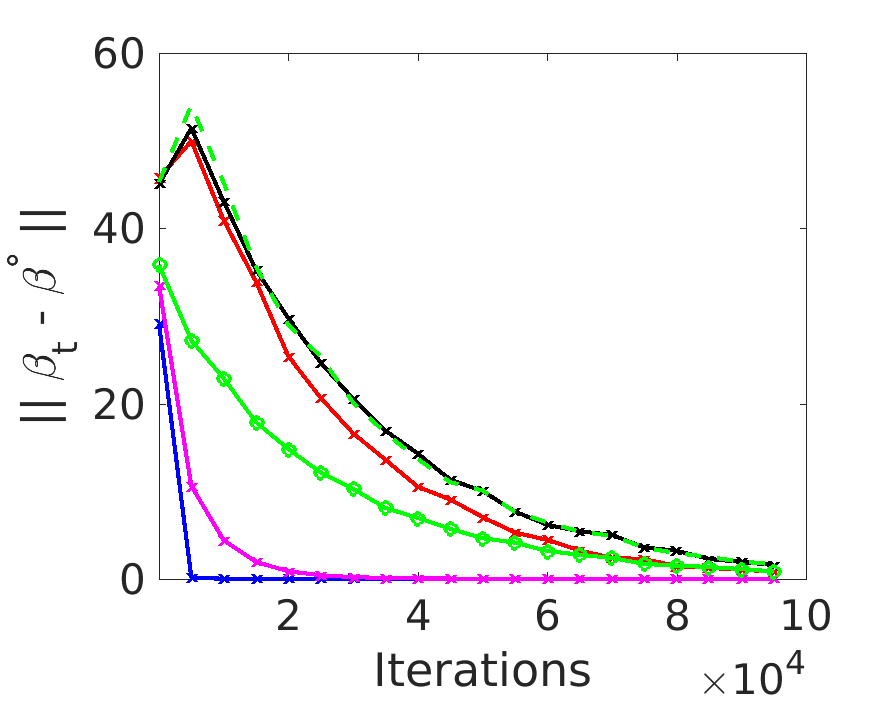}
& \includegraphics[scale=0.35]{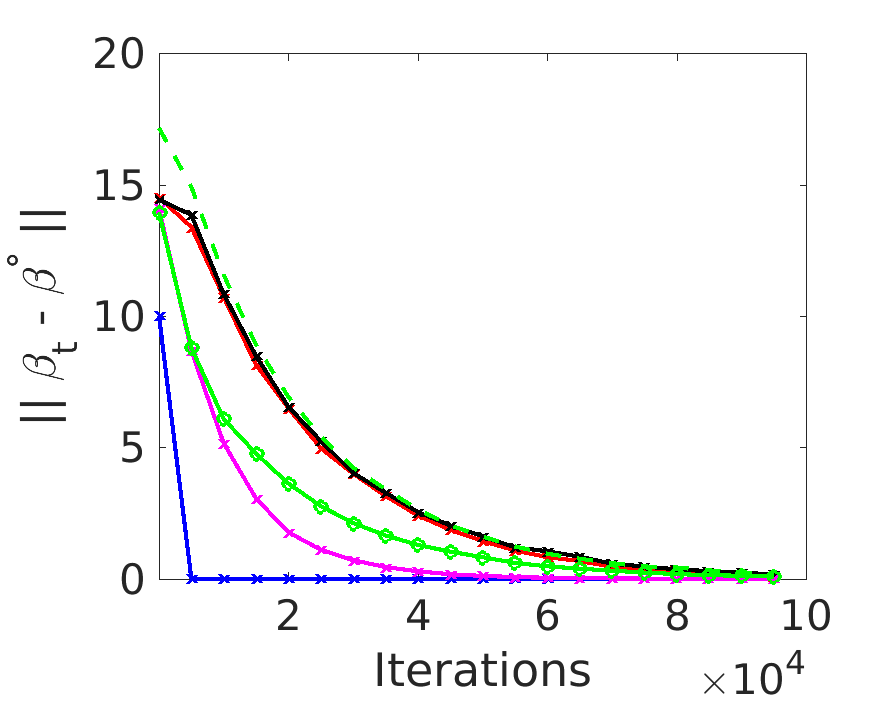} \\
%%%%%%%%%%%%%%%%%%%%%%%%%%%%%%%%%%%%%%%%%%%%%%%%%%%
$10^{-2}$ &  \includegraphics[scale=0.35]{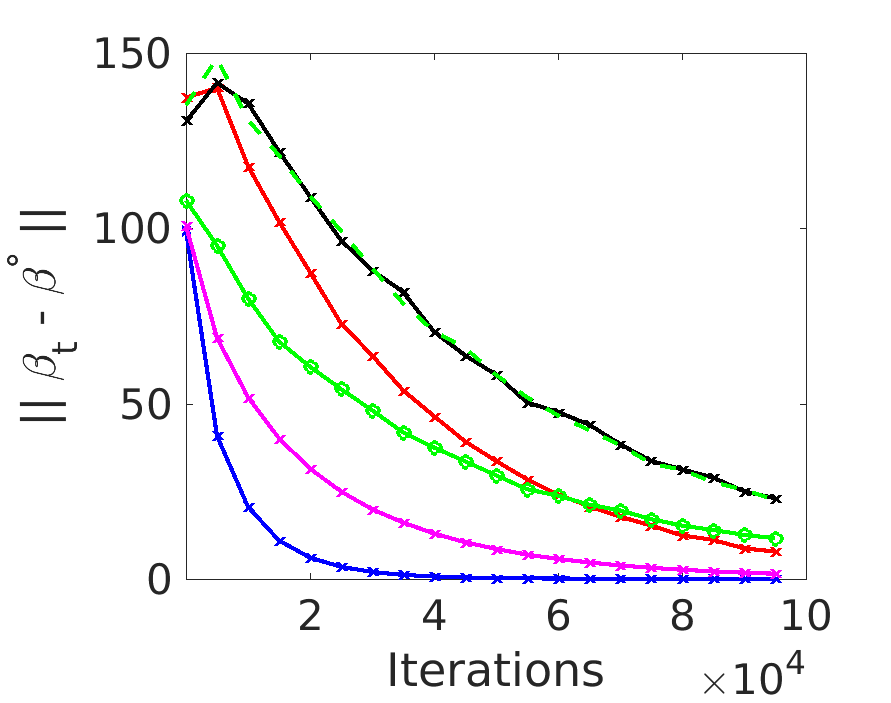}
& \includegraphics[scale=0.35]{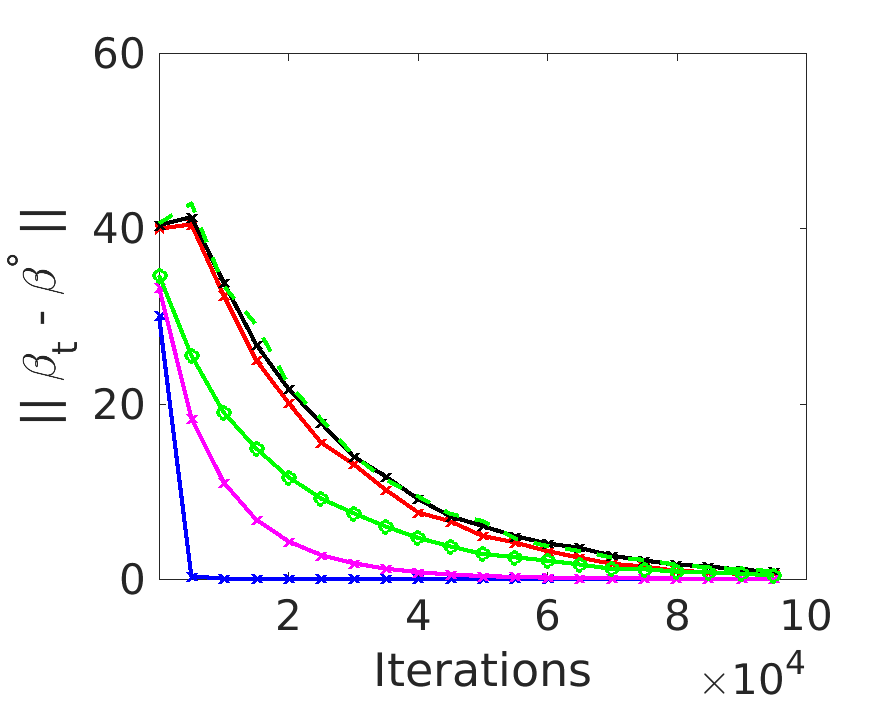}
& \includegraphics[scale=0.35]{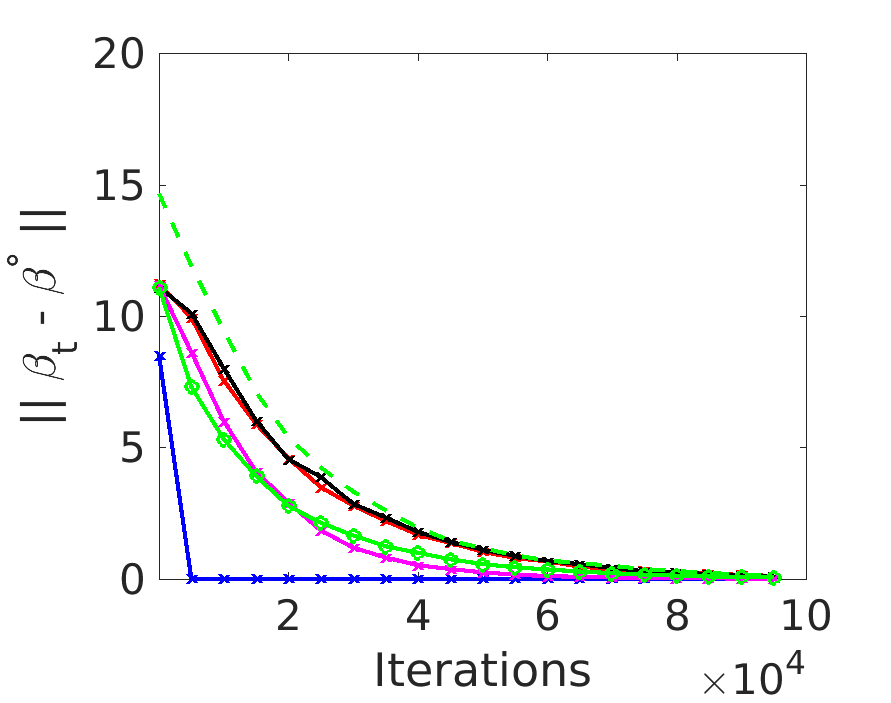} \\
%%%%%%%%%%%%%%%%%%%%%%%%%%%%%%%%%%%%%%%%%%%%%%%%%%%
$10^{-3}$ &  \includegraphics[scale=0.35]{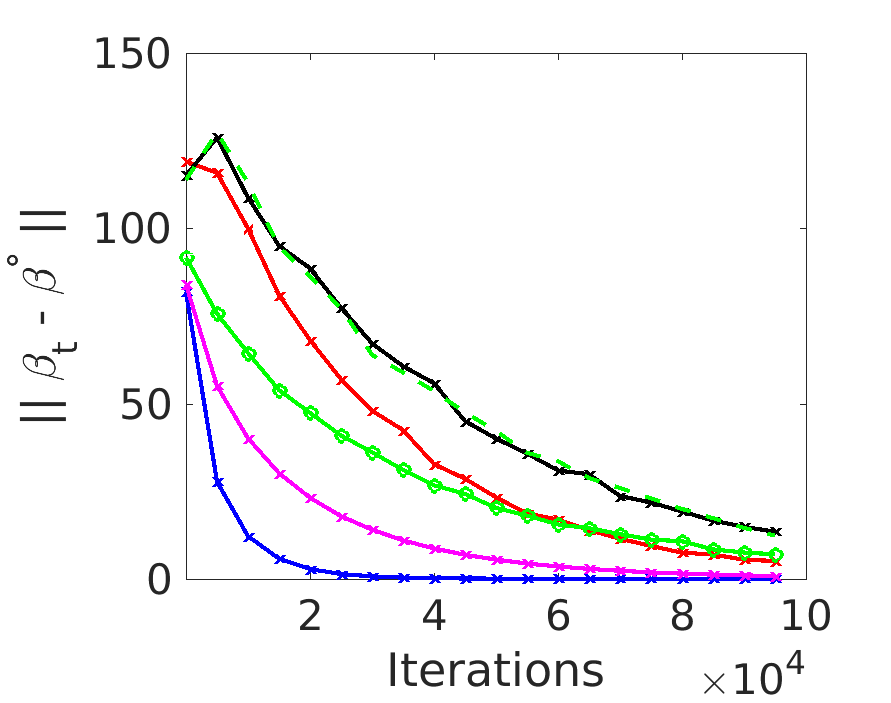}
& \includegraphics[scale=0.35]{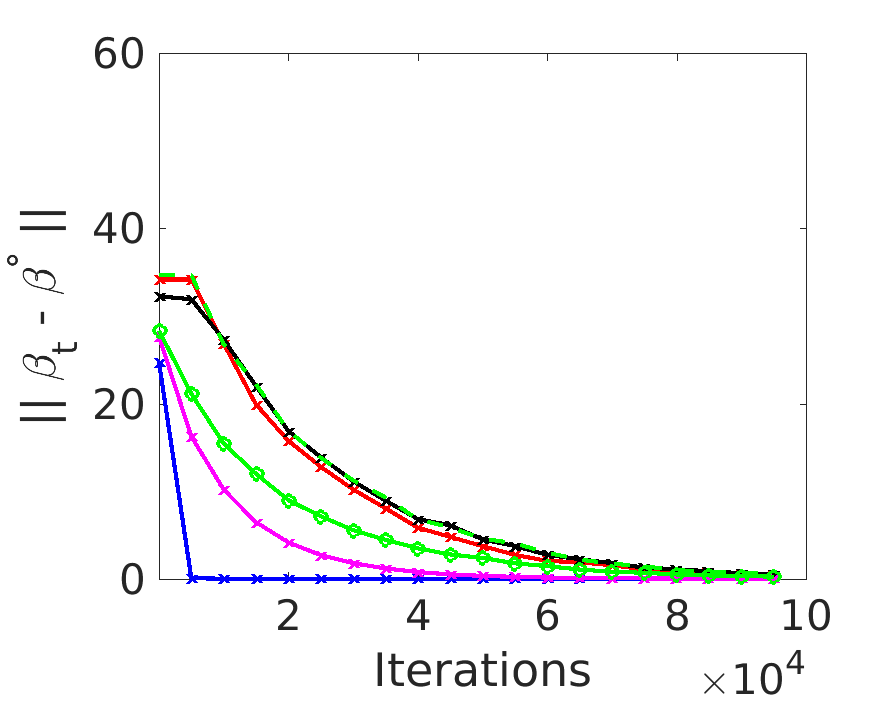}
& \includegraphics[scale=0.35]{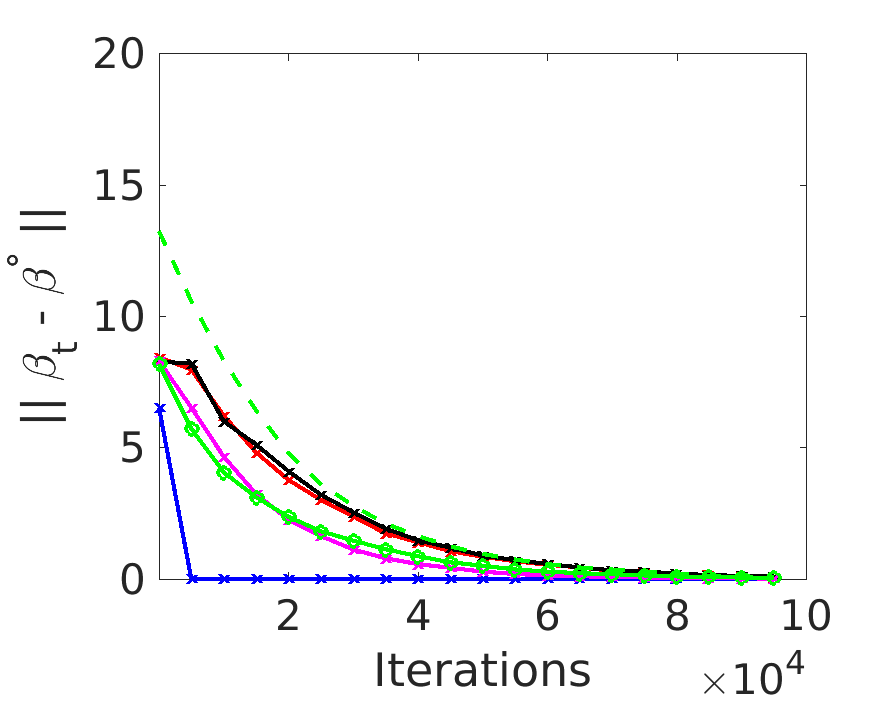} \\
%%%%%%%%%%%%%%%%%%%%%%%%%%%%%%%%%%%%%%%%%%%%%%%%%%%
\end{tabular}
\caption{Simulation results for $\rows = 10^4, \cols = 100$: Euclidean error $\|\bbeta_t - \betaopt\|$ versus iteration count.}
\label{fig:ngp}
\end{figure}
%%%%%%%%%%%%%%%%%%%%%%%%%%%%%%%%%%%%%%%%%%%%%%%%%%%%%%%%%%%%%%%%%%%%%%%%%
%%%%%%%%%%%%%%%%%%%%%%%%%%%%%%%%%%%%%%%%%%%%%%%%%%%%%%%%%%%%%%%%%%%%%%%%%
\begin{figure}[p]
\centering
%\begin{tabular}{|c|}
%\hline \\
\includegraphics[scale=0.3]{legend.png} %\\
%\hline 
%\end{tabular}
\begin{tabular}{m{1cm} >{\centering\arraybackslash}m{5cm} >{\centering\arraybackslash}m{5cm} >{\centering\arraybackslash}m{5cm}}
$\sigma_{min}$ & $\lambda=10^{-3}$ & $\lambda=10^{-2}$ & $\lambda=10^{-1}$ \\
1.0 &  \includegraphics[scale=0.35]{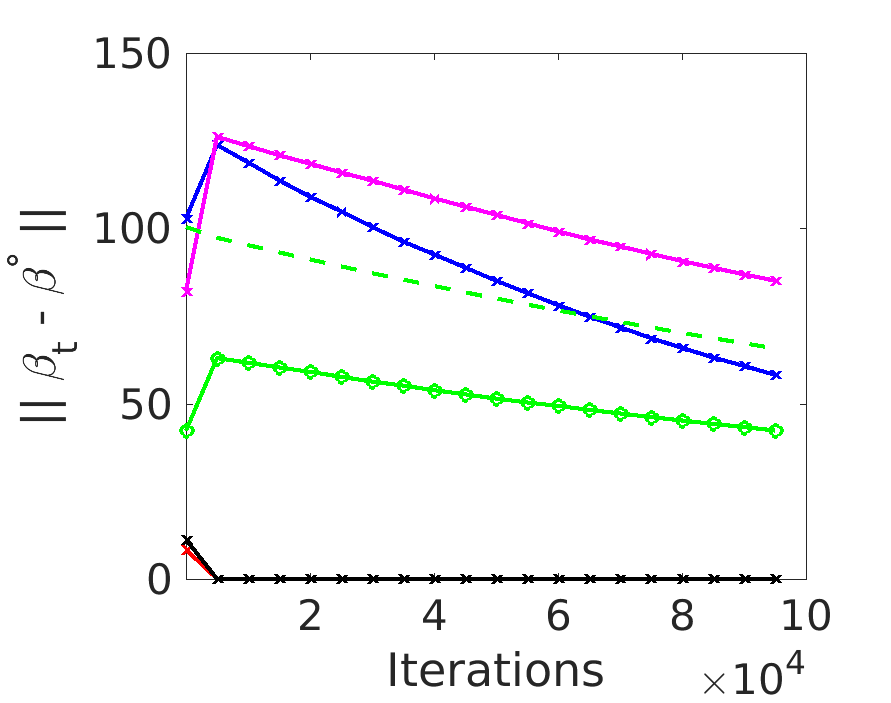}
& \includegraphics[scale=0.35]{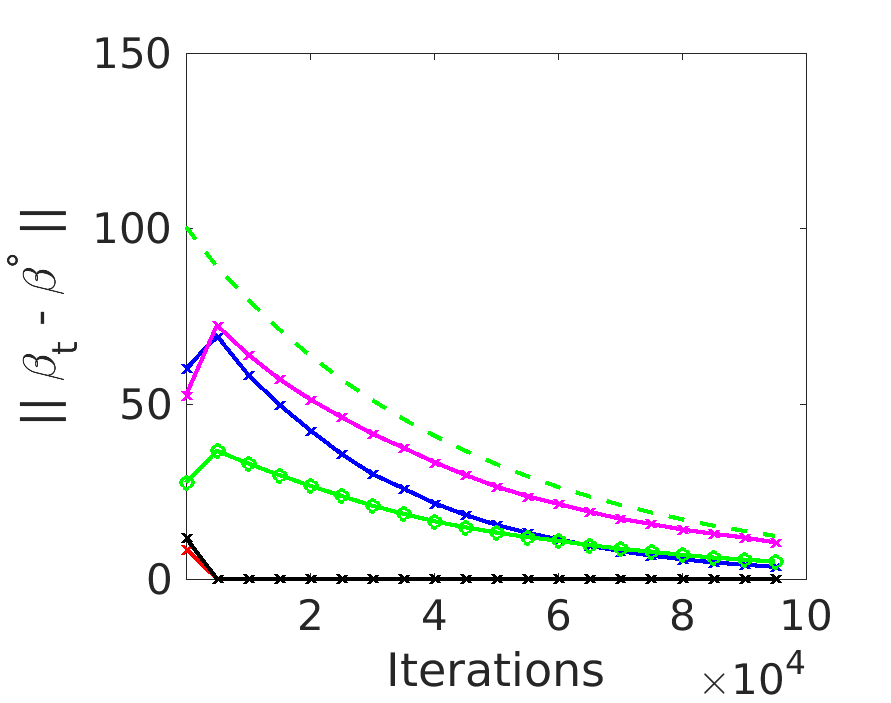}
& \includegraphics[scale=0.35]{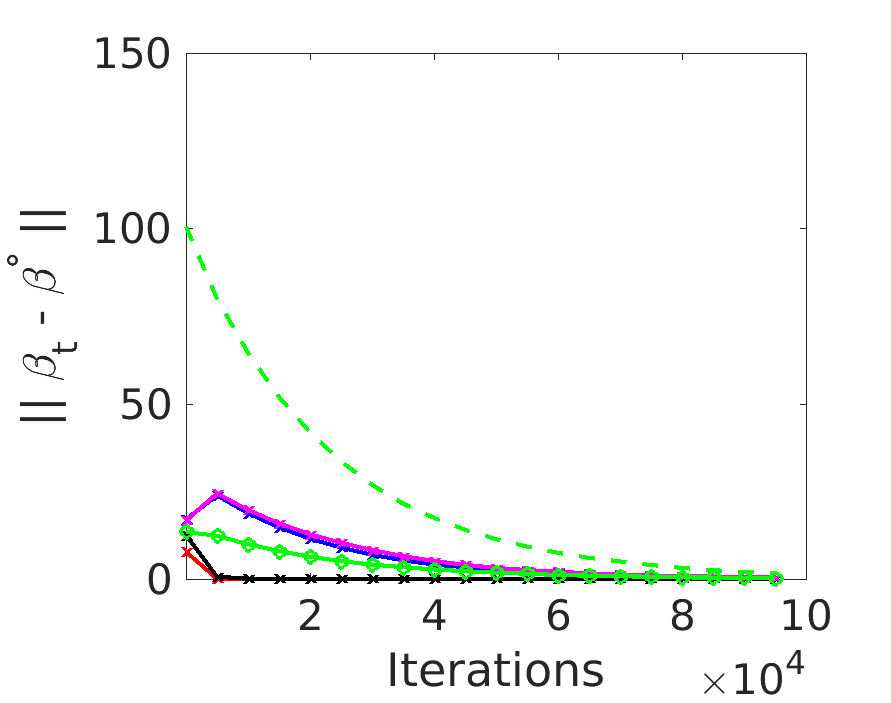} \\
%%%%%%%%%%%%%%%%%%%%%%%%%%%%%%%%%%%%%%%%%%%%%%%%%%%
$10^{-1}$ &  \includegraphics[scale=0.35]{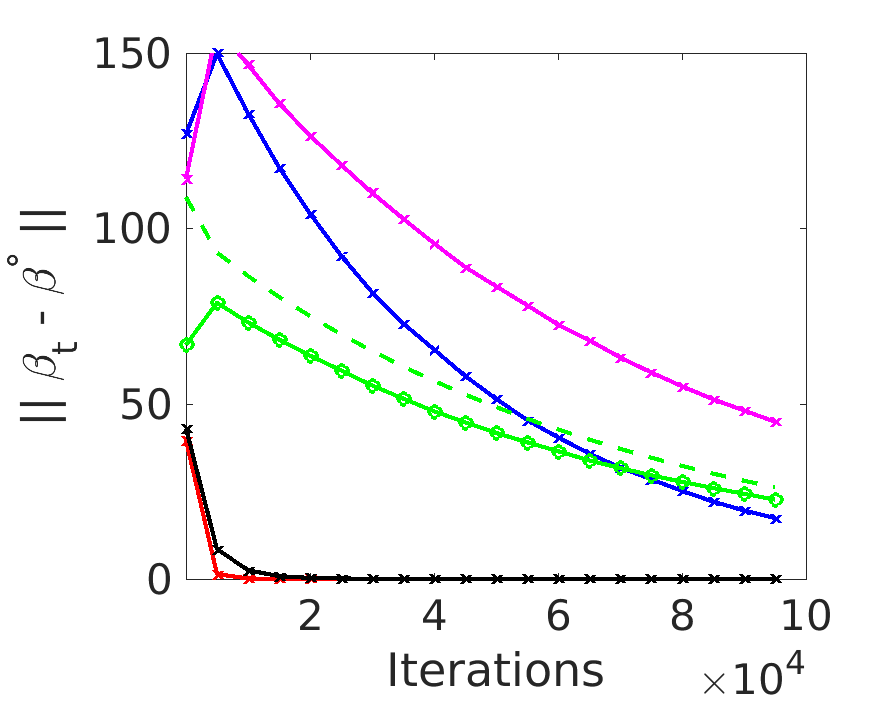}
& \includegraphics[scale=0.35]{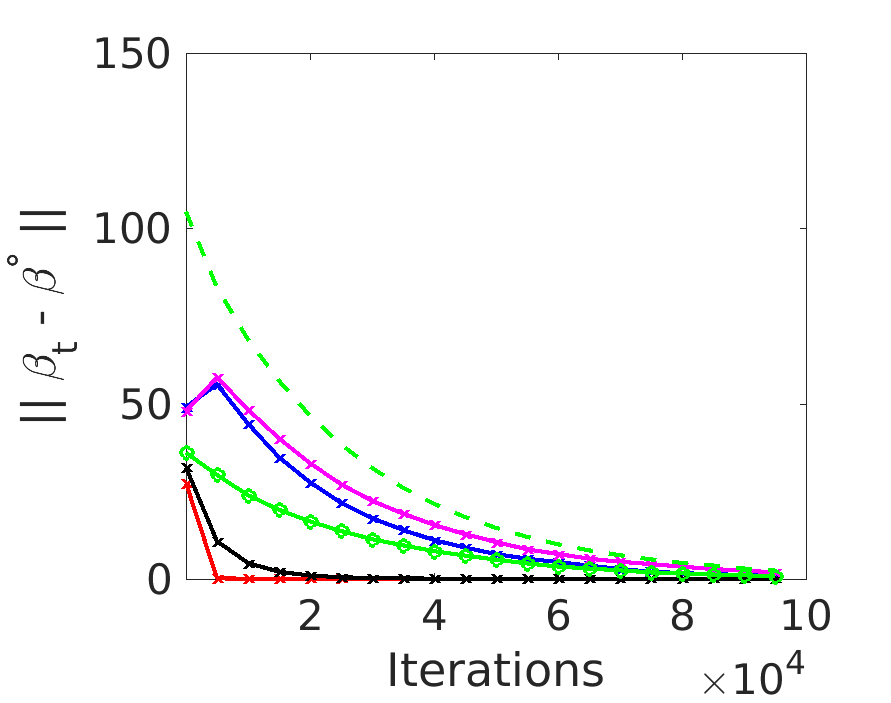}
& \includegraphics[scale=0.35]{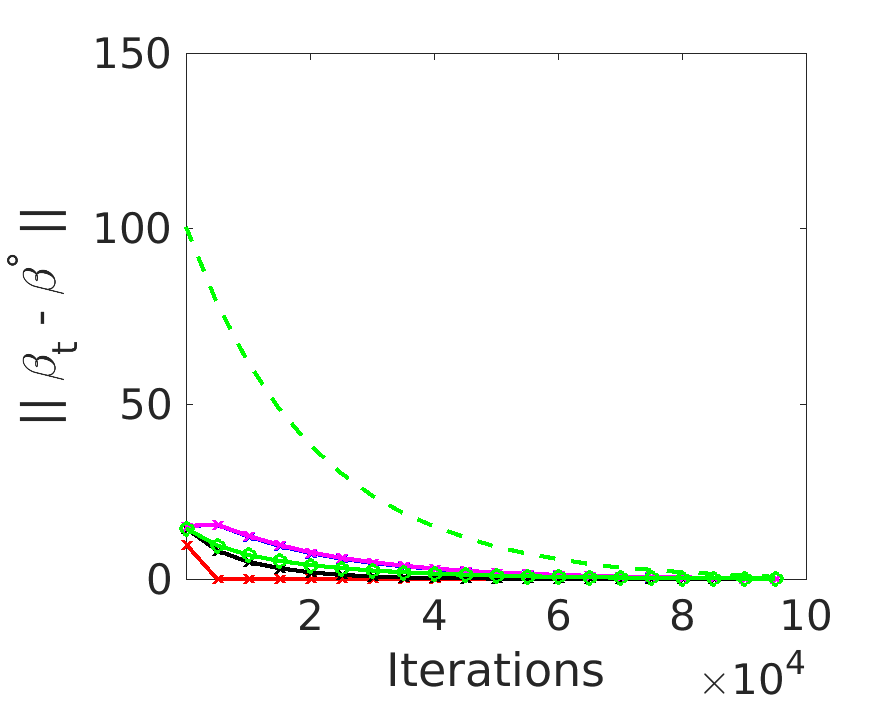} \\
%%%%%%%%%%%%%%%%%%%%%%%%%%%%%%%%%%%%%%%%%%%%%%%%%%%
$10^{-2}$ &  \includegraphics[scale=0.35]{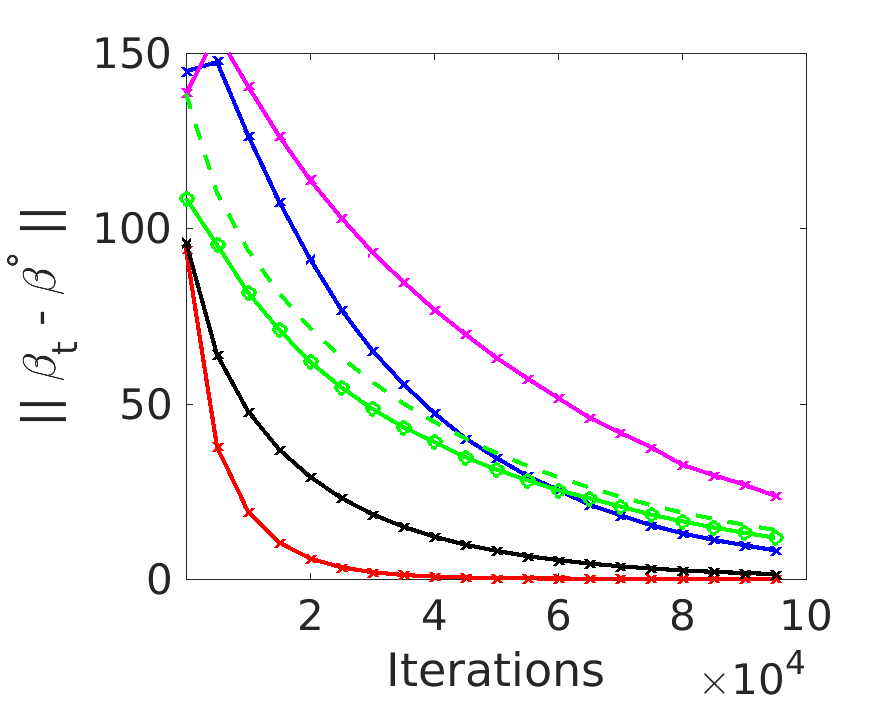}
& \includegraphics[scale=0.35]{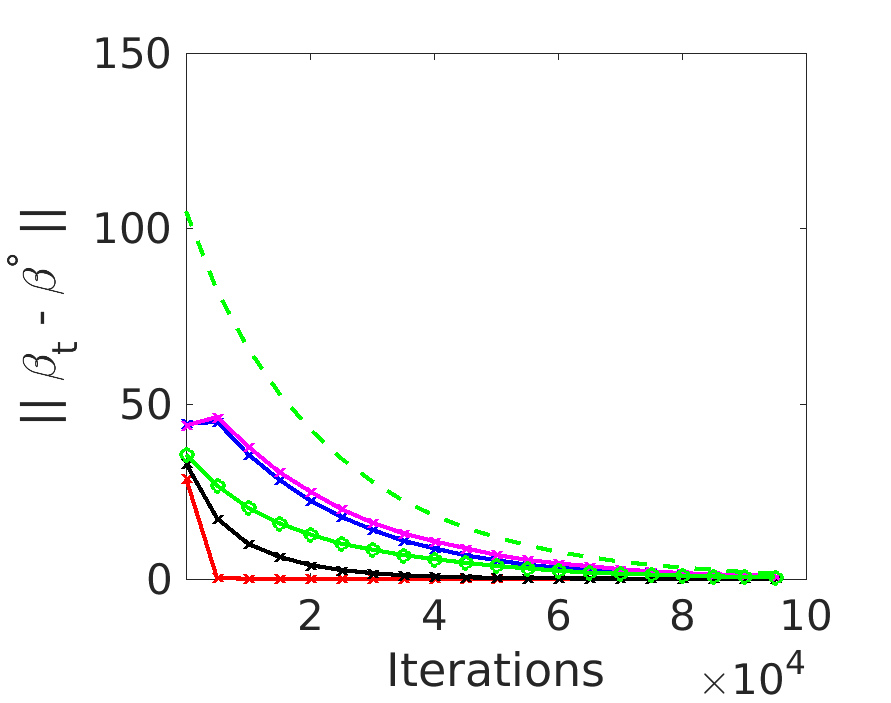}
& \includegraphics[scale=0.35]{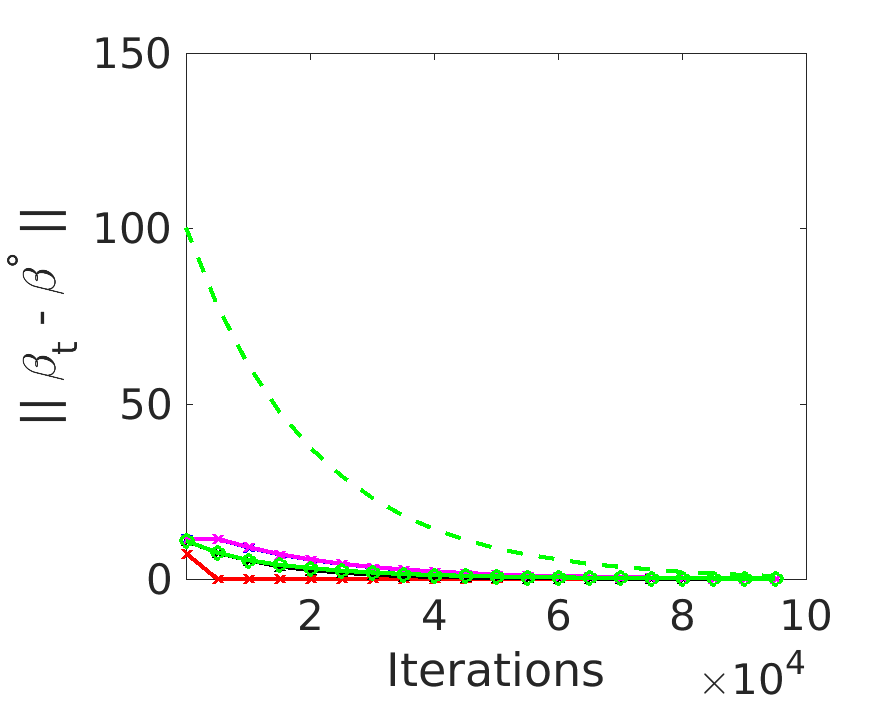} \\
%%%%%%%%%%%%%%%%%%%%%%%%%%%%%%%%%%%%%%%%%%%%%%%%%%%
$10^{-3}$ &  \includegraphics[scale=0.35]{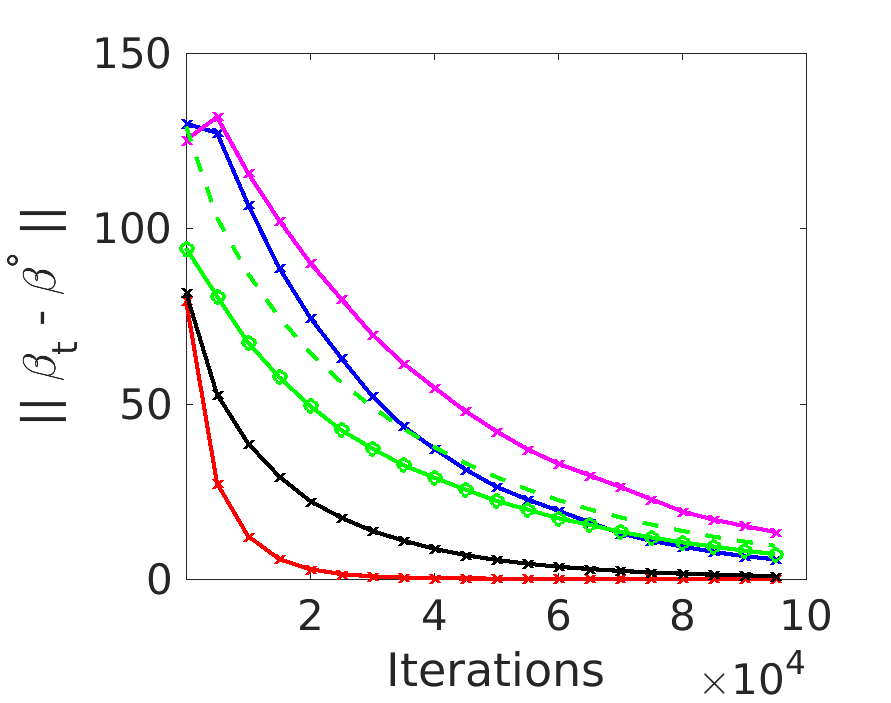}
& \includegraphics[scale=0.35]{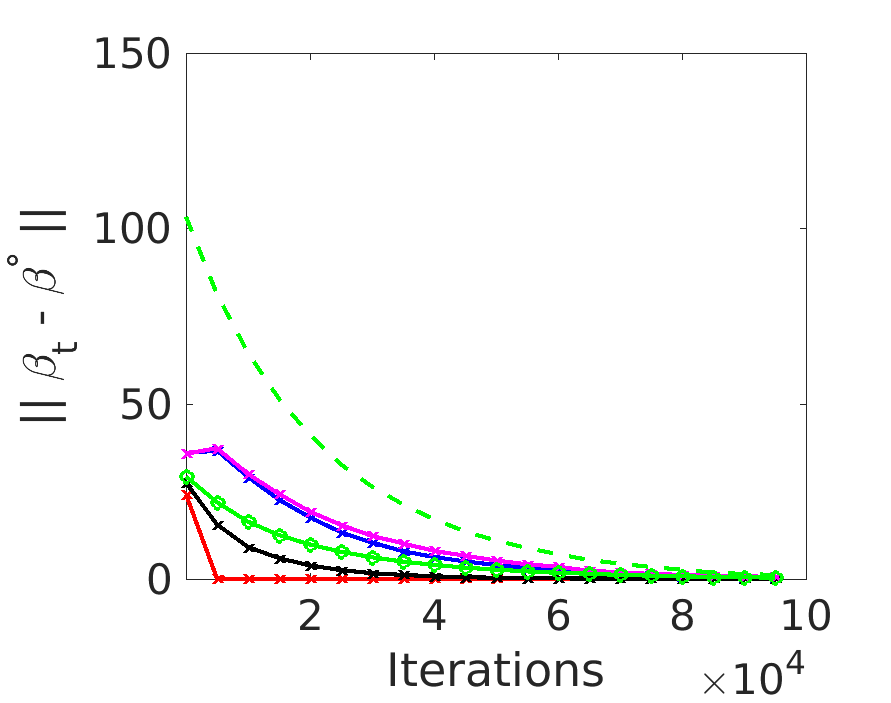}
& \includegraphics[scale=0.35]{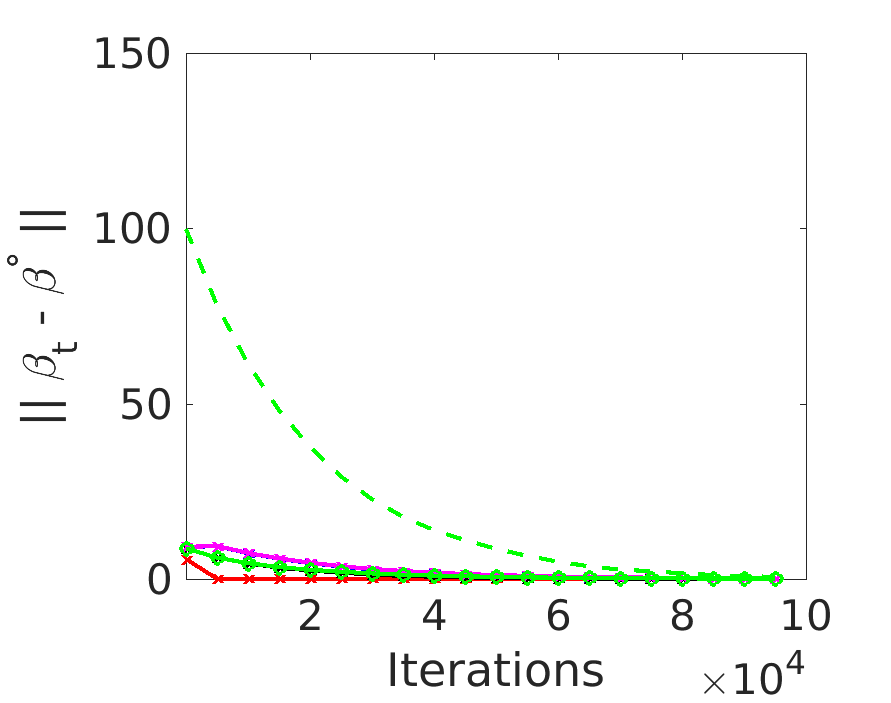} \\
%%%%%%%%%%%%%%%%%%%%%%%%%%%%%%%%%%%%%%%%%%%%%%%%%%%
\end{tabular}
\caption{Simulation results for $\rows = 100, \cols = 10^4$: Euclidean error $\|\bbeta_t - \betaopt\|$ versus iteration count.}
\label{fig:nlp}
\end{figure}
%%%%%%%%%%%%%%%%%%%%%%%%%%%%%%%%%%%%%%%%%%%%%%%%%%%%%%%%%%%%%%%%%%%%%%%%%
%%%%%%%%%%%%%%%%%%%%%%%%%%%%%%%%%%%%%%%%%%%%%%%%%%%%%%%%%%%%%%%%%%%%%%%%%
\begin{figure}[p]
\centering
%\begin{tabular}{|c|}
%\hline \\
\includegraphics[scale=0.3]{legend.png} %\\
%\hline 
%\end{tabular}
\begin{tabular}{m{1cm} >{\centering\arraybackslash}m{5cm} >{\centering\arraybackslash}m{5cm} >{\centering\arraybackslash}m{5cm}}
$\sigma_{min}$ & $m=n=1000$ & $10000=m>n=100$ & $100=m<n=10000$ \\
1.0 &  \includegraphics[scale=0.35]{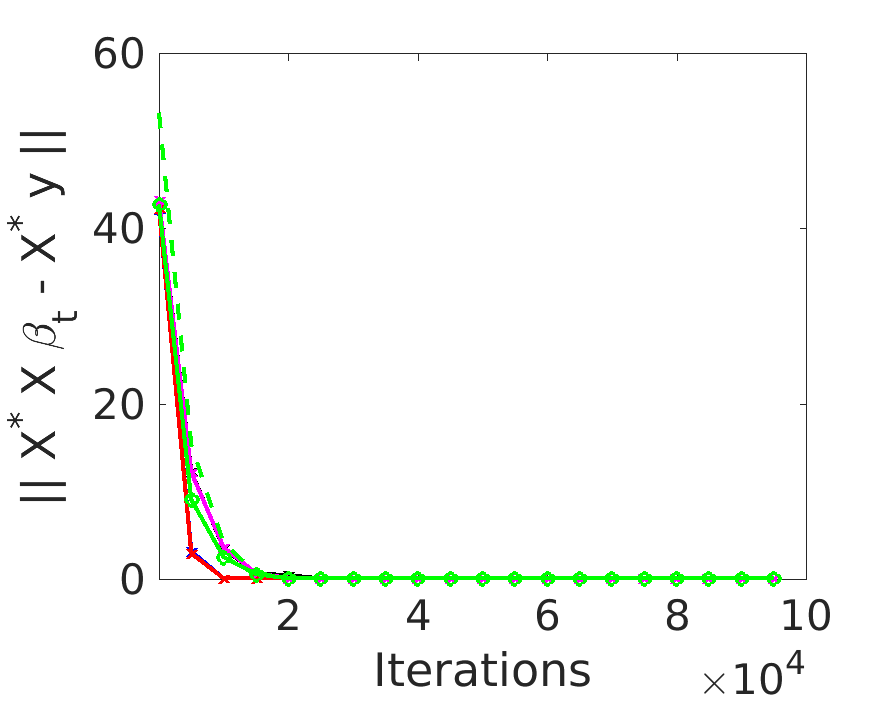}
& \includegraphics[scale=0.35]{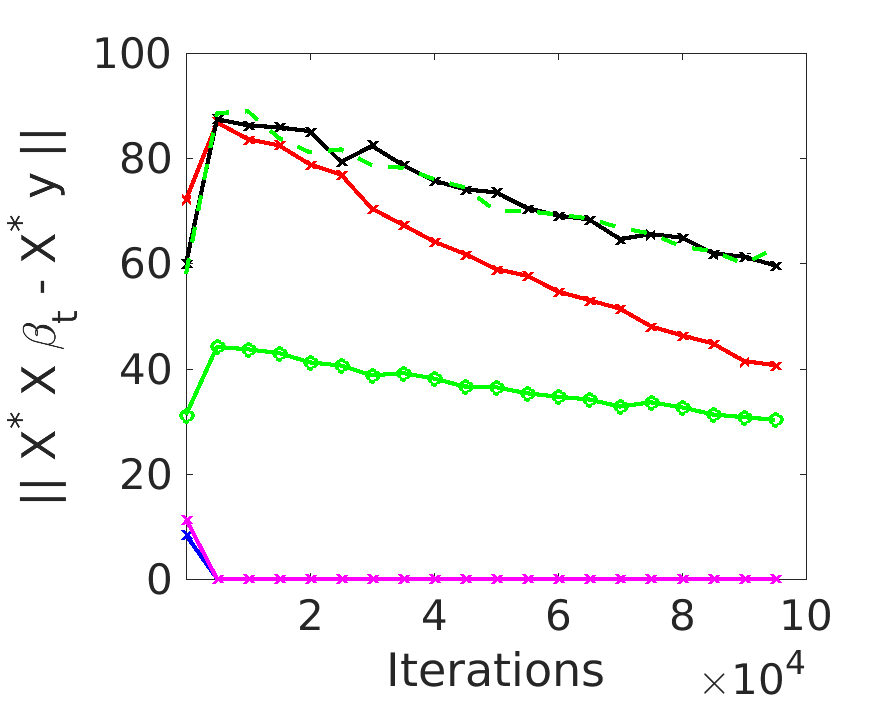}
& \includegraphics[scale=0.35]{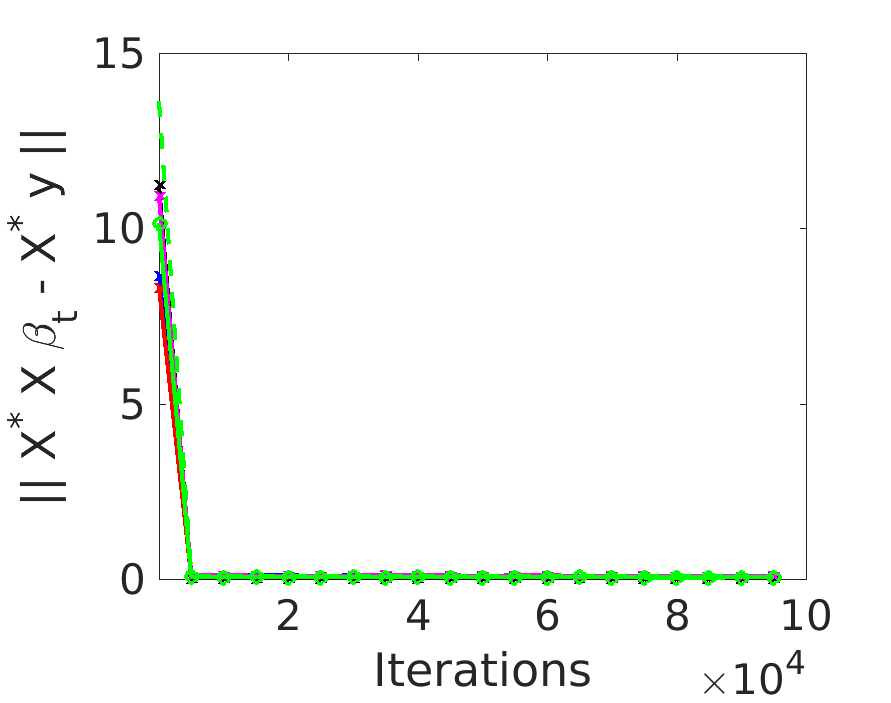} \\
%%%%%%%%%%%%%%%%%%%%%%%%%%%%%%%%%%%%%%%%%%%%%%%%%%%
$10^{-1}$ &  \includegraphics[scale=0.35]{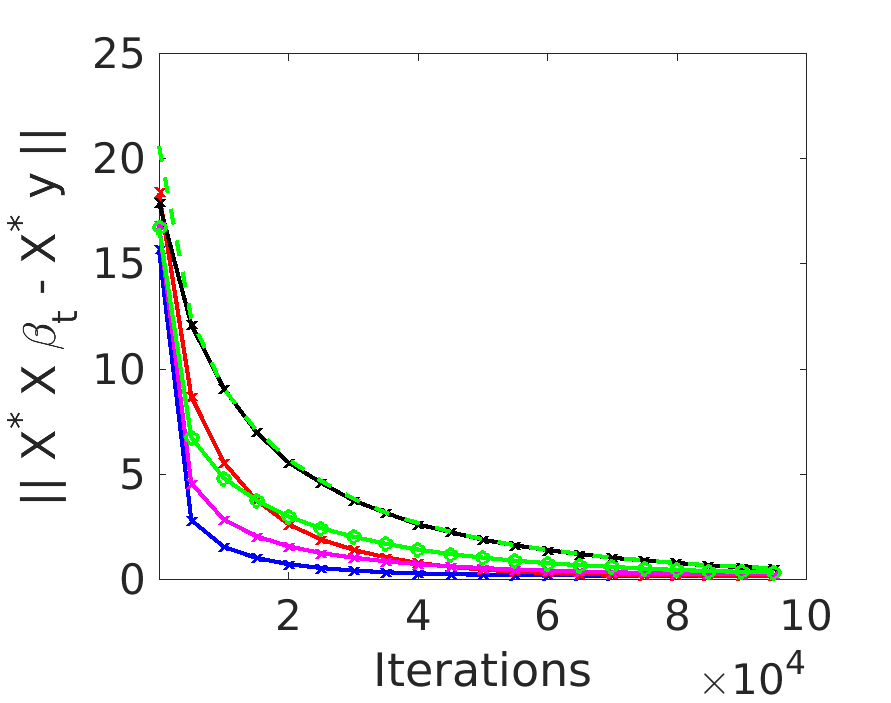}
& \includegraphics[scale=0.35]{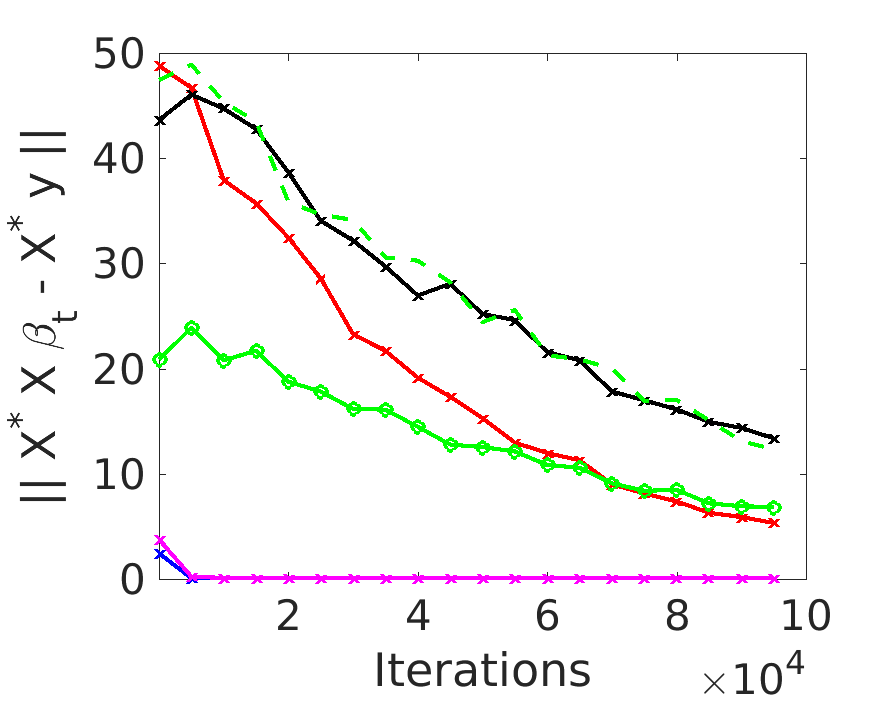}
& \includegraphics[scale=0.35]{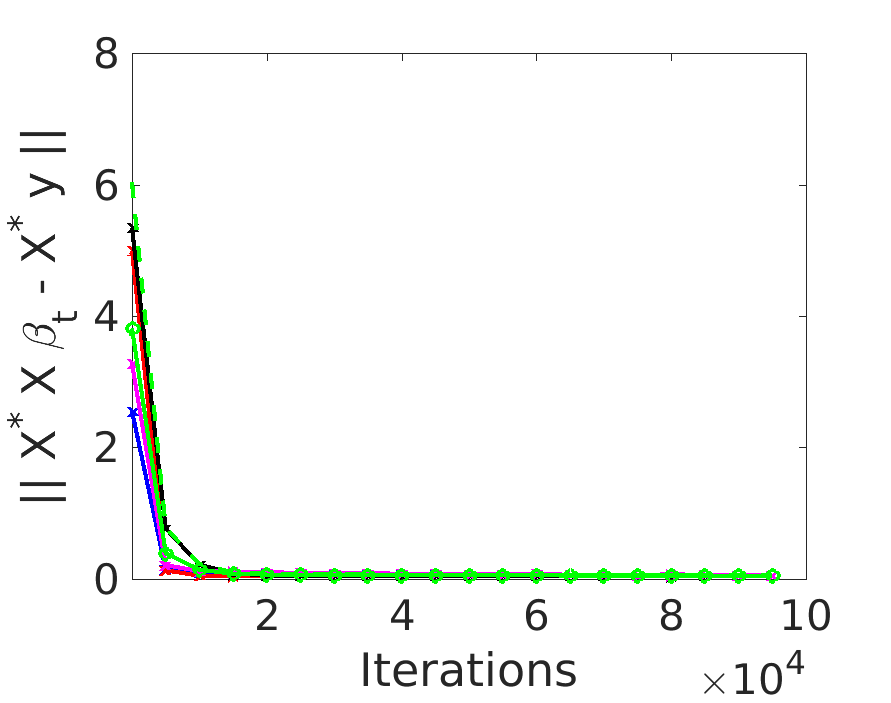} \\
%%%%%%%%%%%%%%%%%%%%%%%%%%%%%%%%%%%%%%%%%%%%%%%%%%%
$10^{-2}$ &  \includegraphics[scale=0.35]{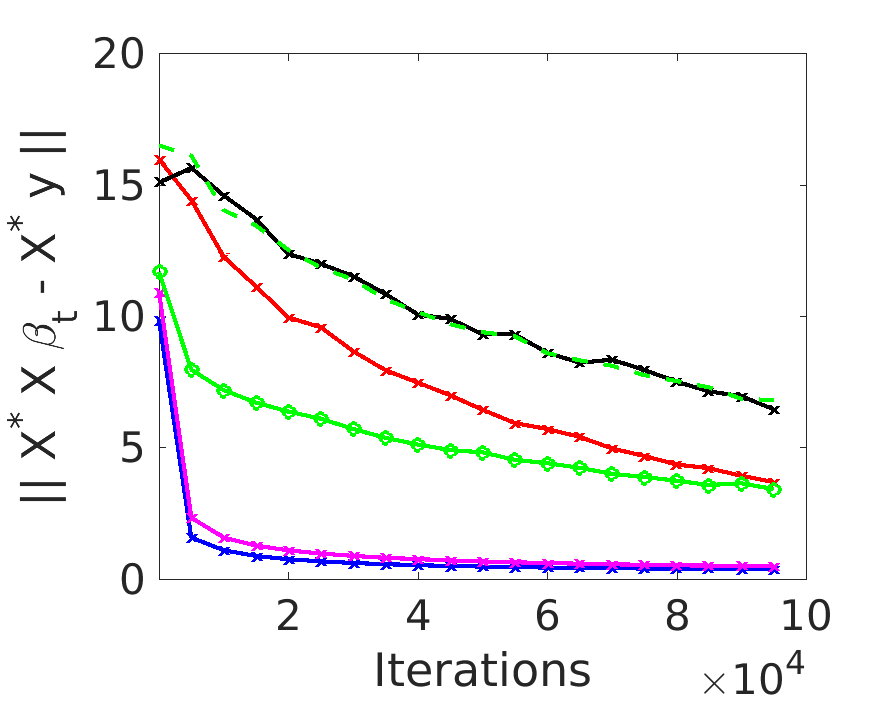}
& \includegraphics[scale=0.35]{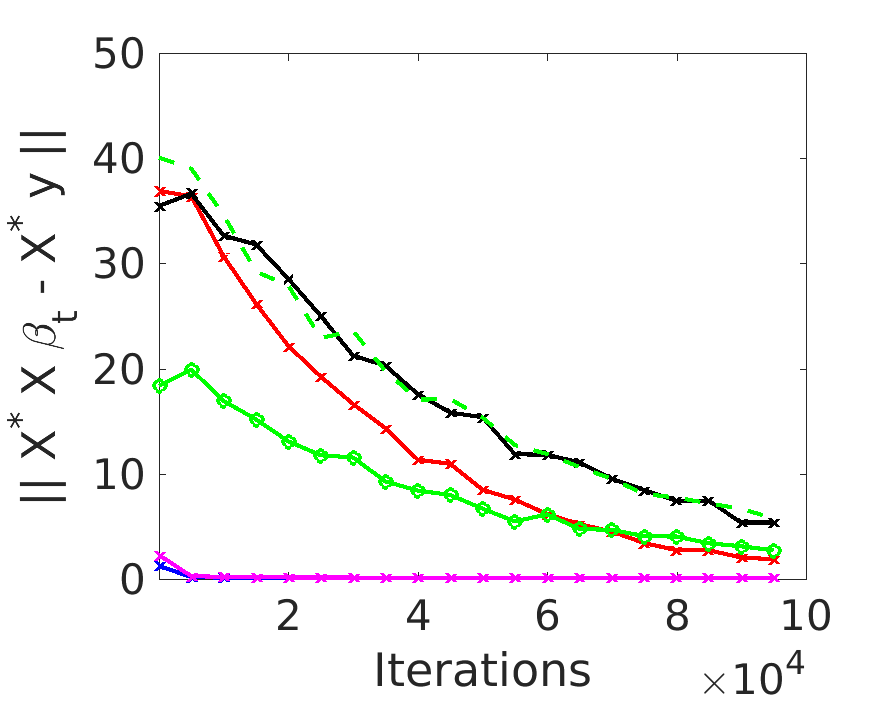}
& \includegraphics[scale=0.35]{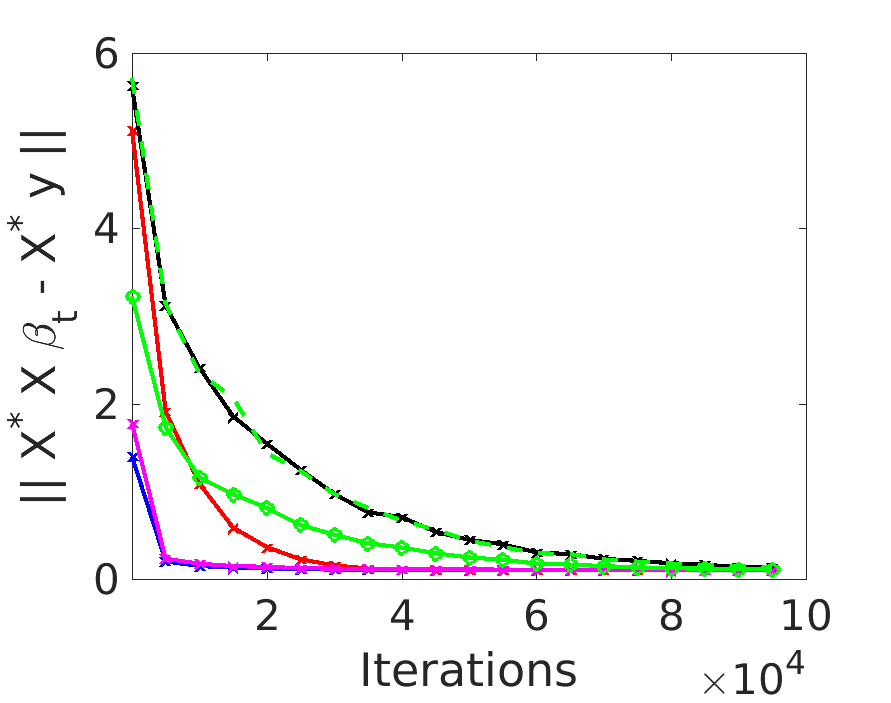} \\
%%%%%%%%%%%%%%%%%%%%%%%%%%%%%%%%%%%%%%%%%%%%%%%%%%%
$10^{-3}$ &  \includegraphics[scale=0.35]{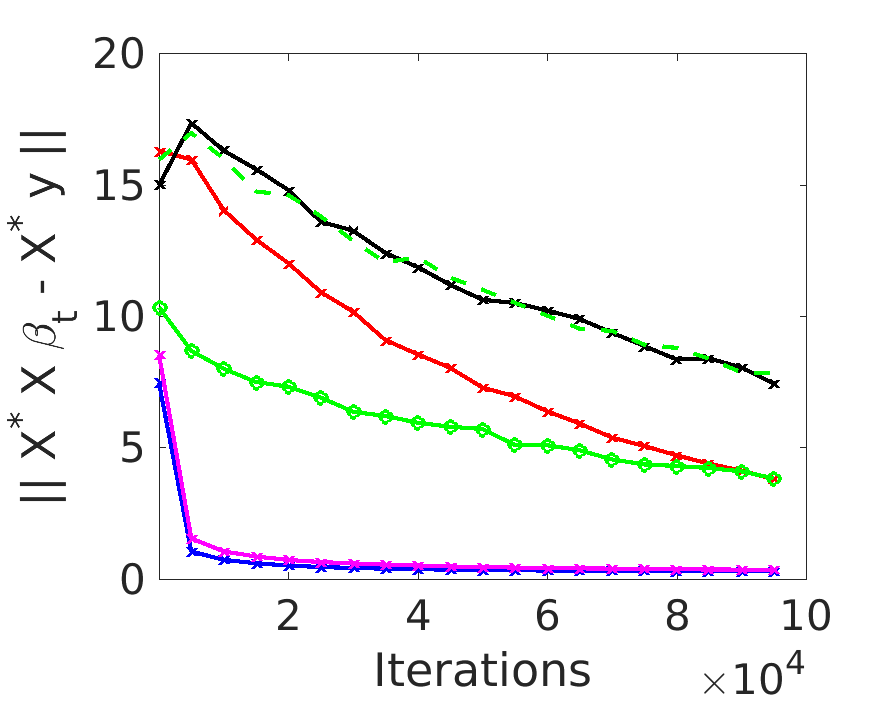}
& \includegraphics[scale=0.35]{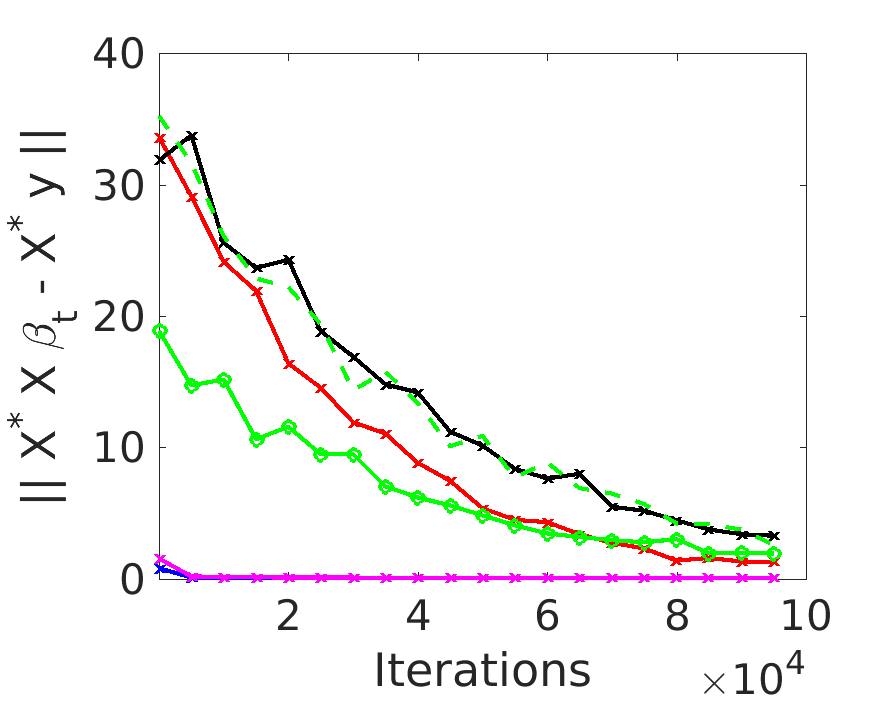}
& \includegraphics[scale=0.35]{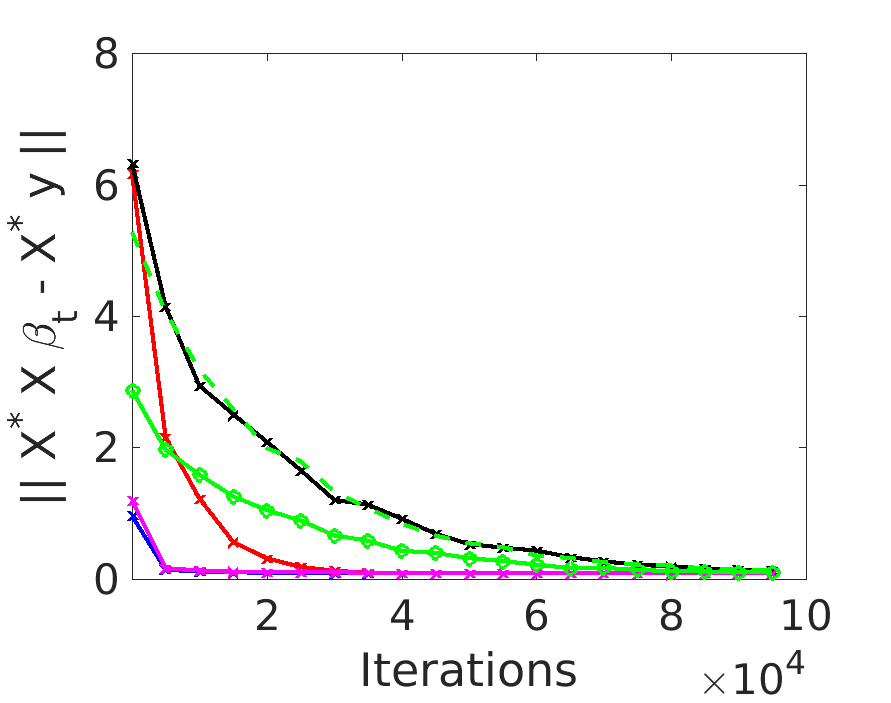} \\
%%%%%%%%%%%%%%%%%%%%%%%%%%%%%%%%%%%%%%%%%%%%%%%%%%%
\end{tabular}
\caption{Simulation results for $\lambda = 10^{-3}$: Error $\|X^*X \bbeta_t - X^*y\|$ versus iteration count.}
\label{fig:xnrm}
\end{figure}
%%%%%%%%%%%%%%%%%%%%%%%%%%%%%%%%%%%%%%%%%%%%%%%%%%%%%%%%%%%%%%%%%%%%%%%%%
%%%%%%%%%%%%%%%%%%%%%%%%%%%%%%%%%%%%%%%%%%%%%%%%%%%%%%%%%%%%%%%%%%%%%%%%%

%\subsection*{Elementary Closed Form Estimators for High Dimensional Regression} 
%
%In \cite{yang2014elementary}, the authors propose an elementary closed form estimator for solving (often sparse) high dimensional linear regression. In Eq.(6-8) in their paper, we see that they need to calculate 
%$$\left[ T_\nu \left(\frac{\bX^\tr \bX}{n}\right) \right]^{-1} \frac{\bX^\tr \by}{n}$$
% for an operator $T_\nu$ defined in \cite{yang2014elementary} (it adds $\nu$ to the diagonal, and soft-thresholds all off-diagonal elements by $\nu$). One can easily derive similar RK/RGS updates for solving this system as well with expected linear convergence. 
%\ah{How is this connected to ridge regression ?}\dn{Agreed. Should we derive these updates? This section seems a little terse and not flushed out.}

\section{Conclusion}\label{sec:conclude}
This work extends the parallel analysis of the randomized Kaczmarz (RK) and randomized Gauss-Seidel (RGS) methods to the setting of ridge regression.  By presenting a parallel study of the behavior of these two methods in this setting, comparisons and connections can be made between the approaches as well as other existing approaches.  In particular, we demonstrate that the augmented projection approach of Ivanov and Zhdanov performs a mix of RK and RGS style updates in such a way that many iterations yield no progress.  Motivated by this framework, we present a new approach which eliminates this drawback, and provide an analysis demonstrating that the RGS variant is preferred in the overdetermined case while RK is preferred in the underdetermined case.  This extends previous analysis of these types of iterative methods in the classical ordinary least squares setting, which are highly suboptimal if directly applied to the setting of ridge regression.

\section*{Acknowledgments}
Needell was partially supported by NSF CAREER grant $\#1348721$, and the Alfred P. Sloan Fellowship. Ramdas was supported in part by ONR MURI grant N000140911052. The authors would also like to thank the Institute of Pure and Applied Mathematics (IPAM) at which this collaboration started. 

\bibliographystyle{agsm}
\bibliography{rk}

%\newpage
%\appendix

\end{document}